\newtheorem{formula}{}
\newtheorem{proposition}[formula]{Proposition}
\newtheorem{corollary}[formula]{Corollary}
\newtheorem{lemma}[formula]{Lemma}
\newtheorem{thm}[formula]{Theorem}
\newtheorem{example}[formula]{Example}
\newcommand{\sg}[1]{\langle{#1}\rangle}
\begin{document}
\title{The recognition problem for table algebras and reality-based algebras } 

\author{{Allen Herman\thanks{The author acknowledges the support of an NSERC Discovery Grant.}, Mikhael Muzychuk\thanks{The author acknowledges the support of the Wilson Endowment of Eastern Kentucky University}, and Bangteng Xu}}

\date{Submitted: July 1, 2015; Revised: July 27, 2016}
\maketitle

\begin{abstract}
Given a finite-dimensional noncommutative semisimple algebra $A$ over $\mathbb{C}$ with involution, we show that $A$ always has a basis $\mathbf{B}$ for which $(A,\mathbf{B})$ is a reality-based algebra.  For algebras that have a one-dimensional representation $\delta$, we show that there always exists an RBA-basis for which $\delta$ is a positive degree map. 
We characterize all RBA-bases of the $5$-dimensional noncommutative semisimple algebra for which the algebra has a positive degree map, and give examples of RBA-bases of $\mathbb{C} \oplus M_n(\mathbb{C})$ for which the RBA has a positive degree map, for all $n \ge 2$. 
\end{abstract}

\smallskip
{\small \noindent {\it Key words :} Table algebras, $C$-algebras, Reality-based algebras.  
\newline {\it AMS Classification:} Primary: 05E30; Secondary: 20C15.}

\section{Introduction} 

Let $A$ be a $(d+1)$-dimensional involutive algebra over $\mathbb{C}$, whose involution $*$ is a ring antiautomorphism that restricts to complex conjugation on scalars.  We say that the pair $(A,\mathbf{B})$ is a {\it reality-based algebra} (or RBA) if there is a basis $\mathbf{B} = \{b_0, b_1, \dots, b_d\}$ of $A$ such that 
\begin{enumerate}
\item the multiplicative identity of $A$ is an element of $\mathbf{B}$ (we index the elements of $\mathbf{B}$ so that $b_0$ is the multiplicative identity of $A$); 
\item $\mathbf{B}^2 \subseteq \mathbb{R}\mathbf{B}$, in particular the structure constants $\lambda_{ijk}$ generated by the basis $\mathbf{B}$ in the expressions $b_i b_j = \sum\limits_{k=0}^d \lambda_{ijk}b_k$ are all real numbers;
\item $\mathbf{B}^* = \mathbf{B}$, so $*$ induces a transposition on the set $\{0,1,\dots,d\}$ given by $b_{i^*}=(b_i)^*$ for all $b_i \in \mathbf{B}$; 
\item $\lambda_{ij0} \ne 0 \iff j = i^*$; and 
\item $\lambda_{ii^*0} = \lambda_{i^*i0} > 0$.
\end{enumerate}  

\noindent {\bf Remark.} In earlier treatments of reality-based algebras in the literature, the involution of the definition is assumed to be $\mathbb{C}$-linear.  Since we have a $*$-fixed basis, this is consistent here with $\bar{*}$, the composition of our involution with complex conjugation on scalars.  

\medskip
If $\mathbf{B}$ is a finite basis of an involutive algebra $A$ satisfying these properties, we will say that $\mathbf{B}$ is an {\it RBA-basis} of $A$.  If the structure constants relative to the RBA-basis $\mathbf{B}$ are integers (rational numbers), then we will say that the RBA-basis is integral (rational).  We can similarly refer to the RBA-basis as being $R$-integral for any subring $R$ of the real numbers. 

An RBA $(A,\mathbf{B})$ has a {\it degree map} if there is an algebra homomorphism $\delta: A \rightarrow \mathbb{C}$ such that $\delta(b_i)=\delta(b_i^*) \in \mathbb{R}^{\times}$ for all $b_i 
\in \mathbb{B}$. This degree map is said to be {\it positive} if $\delta(b_i)>0$ for all $b_i \in \mathbf{B}$.  When there is a positive degree map, it will be the unique algebra homomorphism $A \rightarrow \mathbb{C}$ that is positive on elements of $\mathbf{B}$.  An RBA-basis for an RBA with positive degree map is said to be {\it standard} when $\delta(b_i) = \lambda_{ii^*0}$ for all $i = 0,1,\dots,d$. 

For any algebra, a $\mathbb{C}$-linear map $\tau: A \rightarrow \mathbb{C}$ is called a {\it feasible trace} when it satisfies $\tau(xy)=\tau(yx)$ for all $x,y \in A$.  An RBA with positive degree map has a {\it standard feasible trace}, given by $\tau(\sum_i x_i b_i) = \delta(\mathbf{B}^+)x_0$ for all $\sum_i x_i b_i$ of $A$ that are expressed in terms of the basis $\mathbf{B} = \{b_0,b_1,\dots,b_d\}$.   This standard feasible trace satisfies $\tau(x^*x) > 0$ for all nonzero $x \in A$, and so it induces a nondegenerate $\mathbb{R}$-bilinear form on $A$.

For convenience we will say that the RBA-basis for an RBA with positive degree map is an {\it RBA$^{\delta}$-basis}.  A {\it table algebra} is an RBA with a positive degree map for which the structure constants with respect to its RBA-basis are all nonnegative.  We will say that the distinguished basis of table algebra is a TA-basis.  A commutative RBA with a degree map is a {\it $C$-algebra}. 

RBAs, $C$-algebras and table algebras have significant structural advantages that allow them to behave more like groups than rings.  To get an impression of this phenomenon, we direct the reader's attention to \cite{AFM}, \cite{B95}, \cite{B09}, and \cite{BX}.  It is of fundamental importance, therefore, to be able to determine whether or not a semisimple algebra over $\mathbb{C}$ has an RBA-basis, and if so, to characterize its RBA, $C$-algebra, or table algebra structures.  For commutative semisimple algebras existence of the RBA-bases is not an issue  because a finite abelian group will be a basis.  But for noncommutative algebras it requires a nontrivial construction.  For example, the algebra $M_2(\mathbb{C})$ with the conjugate-transpose involution is one example of an RBA, since 
$$ \left\{ \begin{bmatrix} 1 & 0 \\ 0 & 1 \end{bmatrix}, 
\begin{bmatrix} 0 & 1 \\ 0 & 0 \end{bmatrix},
\begin{bmatrix} 0 & 0 \\ 1 & 0 \end{bmatrix},  
\begin{bmatrix} 1 & 0 \\ 0 & -1 \end{bmatrix} \right\} $$
is an RBA-basis of $M_2(\mathbb{C})$.  Of course, $M_2(\mathbb{C})$ has no chance to have a positive degree map because it has no one-dimensional algebra representation.  (This observation was made by Blau in \cite{B09}.) 

Our main results give a full account of the existence of RBA- and RBA$^\delta$- bases for finite-dimensional semisimple algebras, and information as to whether these bases can be integral or rational.  We start by giving examples of rational RBA-bases of $M_n(\mathbb{C})$ under the conjugate-transpose involution for all $n>1$.  By applying the circle product operation we show that any semisimple algebra over $\mathbb{C}$ has a rational RBA-basis.  This is not true for semisimple algebras over $\mathbb{R}$ in general, since the real quaternion algebra with its usual involution does not have an RBA-basis. 
In the fourth section we use character theory to show that noncommutative algebras of the form $\mathbb{C} \oplus M_n(\mathbb{C})$ with $n > 1$ do NOT have integral RBA$^{\delta}$-bases.  In the fifth section we characterize all of the RBA$^{\delta}$-bases of the noncommutative $5$-dimensional algebra $\mathbb{C} \oplus M_2(\mathbb{C})$ with the conjugate-transpose involution, and give an example of a rational table algebra basis of this algebra.  In the last section we construct an RBA$^{\delta}$-bases for $\mathbb{C} \oplus M_m(\mathbb{C})$ for every $m \ge 2$ that has structure constants in the field $\mathbb{Q}(\sqrt{m})$.  

\section{Rational RBA-bases for $M_n(C)$} 

We begin by constructing examples of rational RBA-bases of the algebra $M_n(\mathbb{C})$ with respect to the conjugate-transpose involution.  Our preference is to find RBA-bases whose structure constants lie in as small a ring as possible. An integral RBA-basis is suitable for use with any coefficient ring, and a rational RBA-basis will produce an RBA structure over any field of characteristic zero. 

The first lemma will reduce the problem to the commutative subalgebra consisting of diagonal matrices.  We will write $E_{i,j}$ for the elementary matrix whose $(i,j)$-entry is $1$ and all of its other entries are $0$.

\begin{lemma} 
Suppose $\mathbf{D}$ is an RBA-basis of the commutative subalgebra of diagonal matrices in $M_n(\mathbb{C})$ for $n \ge 2$, with trivial involution.  Let $\mathbf{B}$ be the union of $\mathbf{D}$ with the set of all off-diagonal elementary matrices.  Then $\mathbf{B}$ is an RBA-basis of $M_n(\mathbb{C})$ with respect to the conjugate-transpose involution.  
\end{lemma}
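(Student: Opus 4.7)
My plan is to verify the five RBA axioms for $\mathbf{B}$, with the only substantive issue being the positivity of a single coefficient. Axioms (1) and (3) are immediate: $I\in\mathbf{D}\subseteq\mathbf{B}$; each $d\in\mathbf{D}$ satisfies $d^*=d$ and hence has real diagonal entries; and $E_{k,l}^*=E_{l,k}$, so $\mathbf{B}^*=\mathbf{B}$. For axiom (2) I split $\mathbf{B}^2$ by type: $\mathbf{D}\cdot\mathbf{D}\subseteq\mathbb{R}\mathbf{D}$ by hypothesis; for $d\in\mathbf{D}$ and an off-diagonal $E_{k,l}$, the products $dE_{k,l}$ and $E_{k,l}d$ are real scalar multiples of $E_{k,l}$; and $E_{k,l}E_{m,n}=\delta_{lm}E_{k,n}$ is either $0$, another off-diagonal elementary matrix, or $E_{k,k}$. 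For the last case, $\mathbf{D}$ consists of $n$ real-entried, $\mathbb{C}$-linearly independent (hence $\mathbb{R}$-linearly independent) vectors, so its $\mathbb{R}$-span is the full $n$-dimensional space of real diagonal matrices, which contains $E_{k,k}$.

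For axioms (4) and (5) I analyze the coefficient of $b_0=I$ in each $b_ib_j$ by cases. When both factors come from $\mathbf{D}$, the required nonvanishing and positivity are inherited from $\mathbf{D}$. When exactly one factor lies in $\mathbf{D}$, the product is a real multiple of an off-diagonal elementary matrix, so the $I$-coefficient is zero, consistent with $b_j\neq b_i^*$. The essential case is both factors off-diagonal: $E_{k,l}E_{m,n}=\delta_{lm}E_{k,n}$ is off-diagonal (and thus $I$-free) unless $l=m$ and $k=n$, which is precisely the condition $b_j=b_i^*$. In that subcase the product equals $E_{k,k}$, and both axioms reduce to showing that when $E_{k,k}$ is expanded as $\sum_j c_j d_j$, the coefficient $c_0$ of $d_0=I$ is strictly positive.

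The positivity of $c_0$ is the only real obstacle, and I plan to exploit the idempotency $E_{k,k}^2=E_{k,k}$ to obtain it. Comparing coefficients of $d_0$ on both sides of this identity, the right-hand side contributes $c_0$, while the left-hand side expands, in terms of the structure constants of $\mathbf{D}$, to $\sum_{i,j}c_ic_j\lambda_{ij0}$. Because $\mathbf{D}$ has trivial involution, axiom (4) for $\mathbf{D}$ forces $\lambda_{ij0}=0$ unless $j=i$, and axiom (5) for $\mathbf{D}$ gives $\lambda_{ii0}>0$; the double sum therefore collapses to $\sum_j c_j^2\lambda_{jj0}$. Since $E_{k,k}\neq 0$ prevents all the $c_j$ from vanishing, I conclude $c_0=\sum_j c_j^2\lambda_{jj0}>0$, completing the verification of axioms (4) and (5) and hence the lemma.
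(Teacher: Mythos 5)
Your argument follows the paper's proof almost step for step: the same case analysis of products of basis elements, with the whole matter coming down to the coefficient of $I$ when $E_{k,\ell}E_{\ell,k}=E_{k,k}$ is expanded in $\mathbf{D}$. The one genuine difference is how that positivity is obtained. The paper invokes \cite[Lemma 2.11]{B09} (the $b_0$-coefficient of a primitive idempotent of an RBA is positive), whereas you derive it directly from $E_{k,k}^2=E_{k,k}$: writing $E_{k,k}=\sum_j c_j d_j$ with real $c_j$ (valid since $\mathbf{D}$ is an $\mathbb{R}$-basis of the real diagonal matrices), the triviality of the involution on $\mathbf{D}$ together with axioms (iv)--(v) for $\mathbf{D}$ collapses the $b_0$-coefficient of the square to $\sum_j c_j^2\lambda_{jj0}>0$. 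This is correct and makes the lemma self-contained; it is a more elementary route to the one fact the paper outsources, but the overall structure is identical.

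One caveat, which applies equally to the paper's own proof: axiom (v) requires not just positivity but the equality $\lambda_{ii^*0}=\lambda_{i^*i0}$, i.e.\ that $E_{k,k}$ and $E_{\ell,\ell}$ have the \emph{same} $I$-coefficient for $k\ne\ell$. Your sentence ``both axioms reduce to showing \dots\ $c_0$ \dots\ is strictly positive'' quietly drops this, and it is not automatic: for the paper's own dimension-$3$ basis $\mathbf{D}=\{I_3,\,Diag(1,-1,1),\,Diag(2,0,-2)\}$ one computes that $E_{1,1}$ has $I$-coefficient $\tfrac14$ while $E_{2,2}$ has $I$-coefficient $\tfrac12$, so $E_{1,2}E_{2,1}$ and $E_{2,1}E_{1,2}$ give different values of $\lambda_{ii^*0}$ and $\lambda_{i^*i0}$. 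The equality holds exactly when all primitive idempotents of $(\mathbb{C}\mathbf{D},\mathbf{D})$ share the same $b_0$-coefficient (e.g.\ when $\mathbf{D}$ comes from a group algebra). Since the paper's proof is silent on precisely the same point, this is a remark about the lemma rather than a defect of your reconstruction, but it is the one place where ``reduce to positivity'' overstates what has been checked.
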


\begin{proof} 
Since $E_{i,j}E_{k,\ell} = \delta_{j,k}E_{i,\ell}$ where $\delta_{j,k}$ is the Kroenecker delta, this product is either $0$ when $j \ne k$, or an off-diagonal elementary matrix in $\mathbf{B}$ when $j = k$ and $i \ne \ell$, or a non-zero diagonal elementary matrix $E_{ii}$ in the span of $\mathbf{D}$ when $j = k$ and $i = \ell$.  In the latter case any diagonal elementary matrix $E_{ii}$ is one of the primitive idempotents of the RBA $(\mathbb{C}\mathbf{D},\mathbf{D})$.  By \cite[Lemma 2.11]{B09}, the coefficient of the identity occurring in $E_{ii}$ will be a positive real number.  Thus $E_{i,j}^* = E_{j,i}$. 

Now let $D$ be one of the diagonal elements of $\mathbf{B}$.   Since the involution is trivial on $\mathbf{D}$ we have that $D = \sum_{i=1}^n p_i E_{i,i}$ with all $p_i$ real.   Let $E_{j,k}$ be an off-diagonal elementary matrix in $\mathbf{B}$.  Then we have that $E_{j,k}D = p_k E_{j,k}$ and $D E_{j,k} = p_j E_{j,k}$.  This implies that the only element $B$ of $\mathbf{B}$ for which the coefficient of $I$ in $E_{j,k}B$ or $BE_{j,k}$ will be nonzero is $E_{k,j}$.   The RBA-basis properties required for diagonal elements of $\mathbf{B}$ are inherited directly from $\mathbf{D}$.  
\end{proof}

It remains to construct rational RBA-bases of the $n$-dimensional commutative algebra $\mathcal{D}$ of diagonal $n \times n$ matrices for all $n>1$.  If $\mathbf{D} = \{b_0=1,b_1,\dots,b_{n-1}\}$ is an RBA-basis of the $n$-dimensional commutative semisimple algebra $\mathbb{C}^n$, and $\{e_0,e_1,\dots,e_{n-1}\}$ is the basis of primitive idempotents of $\mathbb{C}\mathcal{D}$, then  $b_i = \sum_j p_{i,j}e_j$ where $(p_{ij})_{i,j}$ is the first eigenmatrix.  The map $b_i \mapsto \sum_j p_{i,j} E_{j,j}$ identifies $\mathbf{D}$ with an RBA-basis of $\mathcal{D}$.  Note that the fact that the identity matrix is included in this basis is reflected by the fact that every entry of the first row of the first eigenmatrix is a $1$. 

It thus suffices to construct a table algebra of an arbitrary dimension $n$ that has a rational character table.  In dimensions up to $4$ there are association schemes that have rational character tables, which we can use to produce the following RBA-bases (here $Diag(v)$ is the diagonal matrix whose diagonal is the vector $v$):
$$\begin{array}{ll}
\mbox{Dimension $2$: }& \{ I_2, Diag(1,-1) \}, \\
\mbox{Dimension $3$: }& \{ I_3, Diag(1,-1,1), Diag(2,0,-2) \}. \mbox{ and } \\
\mbox{Dimension $4$: }& \{ I_4, Diag( 1,-1,-1,1 ), Diag( 1,-1,1,-1 ), Diag(1,1,-1,-1) \}.
\end{array}$$ 

For dimensions $5$ or more we give a construction of a table algebra that has a rational character table for the given dimension.  Let 
$n \geq 3$ be an integer.  (This constuction is also valid if $n=2$ where it constructs a Klein $4$-group.)  Define a table algebra of dimension $n+2$ with basis $\mathbf{B}=\{b_0=1, b_1,...,b_{n+1} \}$ and structure constants 
$$\begin{array}{rcl} 
b_i^2 &=& (n-1)b_0 + (n-2)b_i, \mbox{ for } i=1,...,n+1, \mbox{ and } \\
b_i b_j &=& \mathbf{B}^+ - b_i -b_j -b_0, \mbox{ for }i\neq j.
\end{array}$$
(Here we write $\mathbf{B}^+$ for $b_0 + b_1 + ... + b_{n+1}$.)  This table algebra is the Bose-Mesner algebra of the scheme corresponding to an affine plane of order $n$. So, if $n$ is a prime power this association scheme does exist, but for other values of $n$ the table algebra construction is still valid.   

Its characters are $\chi_0,\chi_1,...,\chi_{n+1}$ where $\chi_i(b_j) = -1$ if $i\neq j$ and $\chi_i(b_i) = n-1$ (here $i,j\geq 1$).  Therefore, the character table is rational.

\smallskip
The combination of the lemma with these constructions produces our first main objective. 
  
\begin{thm}\label{mnc}
For all $n > 2$, $M_n(\mathbb{C})$ with the conjugate-transpose involution has a rational RBA-basis. 
\end{thm}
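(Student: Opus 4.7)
The plan is to combine the preceding lemma with the explicit rational RBA-bases of the commutative diagonal subalgebra constructed in the discussion immediately above. The lemma reduces the task of producing a rational RBA-basis of $M_n(\mathbb{C})$ under the conjugate-transpose involution to producing a rational RBA-basis $\mathbf{D}$ of the $n$-dimensional commutative subalgebra $\mathcal{D}$ of diagonal matrices with trivial involution. The paragraph preceding the theorem further identifies such bases of $\mathcal{D}$ with rational RBA-bases of the abstract algebra $\mathbb{C}^n$ via the first-eigenmatrix correspondence $b_i = \sum_j p_{i,j}e_j \mapsto \sum_j p_{i,j}E_{j,j}$, so it is enough to exhibit, for each $n \geq 3$, an RBA-basis of $\mathbb{C}^n$ whose first eigenmatrix has rational entries.

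For $n = 3$ and $n = 4$, the explicit diagonal bases listed above do this directly. For $n \geq 5$, I would take the affine-plane table algebra described above with parameter $m = n - 2 \geq 3$, obtaining a table algebra of dimension $m + 2 = n$. Its structure constants $b_i^2 = (m-1)b_0 + (m-2)b_i$ and $b_i b_j = \mathbf{B}^+ - b_i - b_j - b_0$ are integral, and its irreducible characters $\chi_0, \chi_1, \dots, \chi_{m+1}$ take only the values $1$, $-1$, and $m-1$. Hence the first eigenmatrix is rational, and transporting this basis back through the idempotent correspondence yields the required rational $\mathbf{D}$ inside $\mathcal{D}$.

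Applying the preceding lemma to $\mathbf{D}$ and adjoining the off-diagonal elementary matrices $E_{i,j}$ then produces a rational RBA-basis of $M_n(\mathbb{C})$, proving the theorem.

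The only step genuinely worth pausing over is the claim that the characters $\chi_0, \dots, \chi_{m+1}$ exhaust the irreducible characters of the affine-plane algebra, so that the tabulated values really do make up the full first eigenmatrix. This is essentially a dimension count: there are $m + 2 = n$ such functionals, matching $\dim \mathbb{C}^n$; they are pairwise distinct because for $i \geq 1$ the character $\chi_i$ is uniquely characterized as the one sending $b_i$ to $m-1$ rather than $-1$, while $\chi_0$ is the positive degree map with $\chi_0(b_i) = m - 1$ for all $i \geq 1$; and checking that each $\chi_i$ is actually multiplicative reduces to substituting the tabulated values into the two structure-constant relations. Once these routine verifications are in hand, the theorem follows by assembly of ingredients already prepared above, so I do not expect any real obstacle.
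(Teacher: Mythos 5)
Your proposal is correct and follows the paper's own argument essentially verbatim: reduce to the diagonal subalgebra via the preceding lemma, use the explicit bases in dimensions $3$ and $4$, and use the affine-plane table algebra (with your parameter $m = n-2$) for dimensions $5$ and up. The extra verification you flag — that the listed characters are multiplicative, distinct, and exhaust $Irr$ — is a routine check the paper leaves implicit, and your dimension-count justification for it is sound.
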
 

One can ask if the $4$-dimensional quaternion algebra over $\mathbb{R}$ with respect to its usual involution has an RBA-basis.  However, one of the nonidentity basis elements would have to be a non-real symmetric element with respect to the involution, and no such element exists. 

\section{Constructing an RBA-basis of a semisimple algebra}

In this section we will show that the {\it circle product} operation introduced by Arad and Fisman \cite{AF} (see also \cite{BC}) can be used to show that any semisimple involutive algebra over $\mathbb{C}$ has an RBA-basis. 

Let $(A,\mathbf{B})$ be a RBA with RBA-basis $\mathbf{B}=\{b_0,b_1,\dots,b_d\}$ and structure constants $\lambda_{ijk}$.  Suppose $\delta$ is a  linear character of $A$ that is real-valued on $\mathbf{B}$, and let $e_{\delta}$ be the corresponding centrally primitive idempotent of $A$. Let $(A_1,\mathbf{B}_1)$ be another RBA with RBA-basis $\mathbf{B}_1 = \{c_0, c_1,\dots,c_h\}$ and structure constants $\beta_{ijk}$.  The circle product $(A \circ_{\delta} A_1, \mathbf{B} \circ_
{\delta} \mathbf{B}_1)$ is defined by the following: 

\begin{enumerate}
\item $A \circ_{\delta} A_1$ is an algebra whose basis $\mathbf{B} \circ_{\delta} \mathbf{B}_1$ is the disjoint union of $\mathbf{B}$ and $\mathbf{B}_1\setminus\{c_0\}$. 

\item Considered as a product in $A \circ_{\delta} A_1$, $b_ib_j = \sum_k \lambda_{ijk} b_k$ for all $i,j \in \{0,1,\dots,d\}$. 

\item Considered as a product in $A \circ_{\delta} A_1$, $c_ic_j = \sum_k \beta_{ijk} c_k$ for $i \in \{1,\dots,h\}$ and $j \in \{1,\dots,h\}\setminus\{i^*\}$.

\item $b_ic_j = c_jb_i = \delta(b_i)c_j$ for $i \in \{0,1,\dots,d\}$ and $j \in \{1,\dots,h\}$.

\item $c_i c_{i^*} = \beta_{ii^*0}e_{\delta} + \sum_{k>0} \beta_{ijk} c_k$ for $i \in \{1,\dots,h\}$.  
\end{enumerate}

It is a consequence of \cite[Theorem 1.1]{BC} that the circle product of a $C$-algebra $(A,\mathbf{B},\delta)$ having a rational-valued degree map $\delta$ with an RBA $(A_1,\mathbf{B}_1)$ becomes an RBA whose RBA-basis is $\mathbf{B} \circ_{\delta} \mathbf{B}_1$.  From the above definition, we can see that whenever $F$ is a subfield of the real numbers for which the RBA-bases of $\mathbf{B}$ and $\mathbf{B}_1$ are both $F$-integral, then $\mathbf{B} \circ_{\delta} \mathbf{B}_1$ will be $F$-integral.   That the circle product of RBA-bases with nonnegative structure constants will be an RBA-basis with nonnegative structure constants also follows immediately from the definition.  Furthermore, if $\mathbf{B}$ and $\mathbf{B}_1$ both admit positive degree maps $\delta$ and $\delta_1$, then $\mathbf{B} \circ_{\delta} \mathbf{B}_1$ admits the positive degree map 
$$ \tilde{\delta}(b) = \begin{cases} \delta(b) & \mbox{ if } b \in \mathbf{B} \\ \delta_1(b) & \mbox{ if } b \in \mathbf{B}_1 \end{cases}, \mbox{ for all } b \in \mathbf{B} \circ_{\delta} \mathbf{B}_1. $$

We will apply the circle product operation to construct an RBA-basis of $\mathbb{C} \oplus M_n(\mathbb{C})$.  For our $C$-algebra $(A,\mathbf{B})$ we use the $2$-dimensional group algebra $\mathbb{C}[C_2]$, with degree map given by the trivial character of $C_2$.  For the RBA $(A_1,\mathbf{B}_1)$ we use $M_n(\mathbb{C})$ with a rational RBA-basis guaranteed by Theorem \ref{mnc}.

\begin{thm} 
Let $\mathbb{C}C_2$ be the complex group algebra of the group $C_2=\{1,x\}$, and let $\delta$ be the trivial character of the group $C_2$.  Let $\mathbf{B}=\{b_0,b_1,\dots,b_d\}$ be a rational RBA-basis of $M_n(\mathbb{C})$.  

Then $C_2 \circ_{\delta} \mathbf{B}$ is a rational RBA-basis of $\mathbb{C} \oplus M_n(\mathbb{C})$. 
\end{thm}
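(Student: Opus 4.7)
The plan is to apply the circle product construction with the $C$-algebra $(\mathbb{C}C_2,\{1,x\},\delta)$ and the RBA $(M_n(\mathbb{C}),\mathbf{B})$, then identify the resulting algebra with $\mathbb{C} \oplus M_n(\mathbb{C})$. The RBA-basis property of $C_2 \circ_\delta \mathbf{B}$ follows directly from \cite[Theorem 1.1]{BC} as invoked earlier in this section, since the trivial character $\delta$ is a positive rational-valued degree map on $C_2$. Rationality of the structure constants of $C_2 \circ_\delta \mathbf{B}$ will be a quick check from the defining rules: the structure constants of $C_2$ are integers, those of $\mathbf{B}$ are rational by hypothesis, and the only new constants arise in rule (v) through the expansion $e_\delta = \tfrac{1}{2}(1+x)$, which contributes only halves of the already rational constants $\lambda_{ii^*0}$ to the coefficients of $1$ and $x$.

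The substantive step is showing that $C_2 \circ_\delta M_n(\mathbb{C}) \cong \mathbb{C} \oplus M_n(\mathbb{C})$ as algebras. I would use the orthogonal central idempotents $e_\delta = \tfrac{1}{2}(1+x)$ and $e_- = \tfrac{1}{2}(1-x)$ of $\mathbb{C}C_2$. These remain central in the circle product because rule (iv) gives $xb_j = \delta(x)b_j = b_j = b_jx$ for every $j \geq 1$, so $x$ (and hence $e_\delta$ and $e_-$) commutes with every basis element. The same rule gives $e_\delta b_j = b_j$ and $e_- b_j = 0$ for $j \geq 1$, so the decomposition $A = e_\delta A \oplus e_- A$ places every $b_j$ with $j \geq 1$ into $e_\delta A$ while $e_- A = \mathbb{C} e_-$ is one-dimensional.

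For the larger summand, I would verify that the map sending $e_\delta \mapsto b_0 = I_n$ and $b_j \mapsto b_j$ for $j \geq 1$ is an algebra isomorphism $e_\delta A \to M_n(\mathbb{C})$. Rules (ii) and (iii) of the circle product reproduce exactly the $M_n(\mathbb{C})$ products $b_ib_j$ with $j \neq i^*$, while rule (v) replaces the $b_0$-coefficient of $b_ib_{i^*}$ with a coefficient of $e_\delta$, which agrees under the identification $e_\delta \leftrightarrow I_n$. Dimensions match: $\dim e_\delta A = 1 + d = n^2$. Together with $e_-A \cong \mathbb{C}$, this gives the required isomorphism.

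The main obstacle is confirming centrality of $e_\delta$ and $e_-$ in the circle product, since the algebra is defined purely by its structure constants rather than as a subalgebra of a concrete object, so one must extract commutativity of $x$ with every $b_j$ from rule (iv) alone; once that and the matching of rule (v) with the $M_n(\mathbb{C})$ product are in hand, the identification with $\mathbb{C} \oplus M_n(\mathbb{C})$ and the rationality verification are routine.
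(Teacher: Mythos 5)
Your proposal is correct and follows essentially the same route as the paper: both invoke \cite[Theorem 1.1]{BC} for the RBA-basis property, observe that rationality is immediate from the circle product rules since $e_\delta = \tfrac12(1+x)$ only introduces halves, and identify the algebra as $\mathbb{C}(1-e_\delta)\oplus\mathbb{C}[(\mathbf{B}\setminus\{b_0\})\cup\{e_\delta\}]\cong\mathbb{C}\oplus M_n(\mathbb{C})$ using $e_\delta b = b e_\delta = b$ for $b\in\mathbf{B}\setminus\{b_0\}$. You simply spell out in more detail the centrality of $e_\delta$ and the matching of rule (v) with the matrix product, which the paper leaves implicit.
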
 

\begin{proof}
The definition of the structure constants for the circle product basis  
$C_2 \circ_{\delta} \mathbf{B} = C \cup (\mathbf{B} \setminus \{I\})$ 
requires the centrally primitive idempotent $e_{\delta}=\frac12(1+x)$ of $\mathbb{C}C_2$.  The fact that this circle product basis is a rational RBA-basis is a consequence of \cite[Theorem 1.1]{BC}.  From the definition of the circle product in \cite{BC}, 
$$\mathbb{C}[C_2 \circ_{\delta} \mathbf{B}] = \mathbb{C}(1 - e_{\delta}) \oplus \mathbb{C}[(\mathbf{B} \setminus \{b_0\}) \cup \{e_{\delta}\}], $$
which is isomorphic as an algebra to $\mathbb{C} \oplus M_n(\mathbb{C})$ since $e_{\delta}b=be_{\delta}=1b=b1=b$, for all $b \in \mathbf{B}\setminus \{b_0\}$. 
\end{proof} 

\begin{corollary}\label{allrba}
Every finite-dimensional semisimple algebra over $\mathbb{C}$ has a rational RBA-basis. 
\end{corollary}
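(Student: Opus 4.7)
My plan is to prove Corollary~\ref{allrba} by reducing it to a direct block-by-block extension of Lemma~1. By the Wedderburn--Artin theorem, any finite-dimensional semisimple $\mathbb{C}$-algebra is isomorphic to $A = \bigoplus_{\ell=1}^{k} M_{n_\ell}(\mathbb{C})$ for some integers $n_1,\dots,n_k \ge 1$, and I will equip $A$ with the involution acting as the conjugate-transpose within each block (this is a genuine involution of $A$ restricting to complex conjugation on the scalar subalgebra).

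Following Lemma~1, the key object is the commutative ``block-diagonal'' subalgebra $\mathcal{D} \subseteq A$ consisting of matrices that are diagonal within each block; one has $\mathcal{D} \cong \mathbb{C}^N$ with $N = n_1 + \cdots + n_k$. The constructions of Section~2---the explicit rational RBA-bases in dimensions $\le 4$ together with the affine-plane table algebra supplying a rational character table in every higher dimension---provide a rational RBA-basis $\mathbf{D}$ of $\mathcal{D}$, with trivial involution since its elements are real block-diagonal matrices. I would then set
\[
\mathbf{B} \;=\; \mathbf{D} \,\cup\, \bigcup_{\ell=1}^{k} \{E_{i,j}^{(\ell)} : 1 \le i \ne j \le n_\ell\},
\]
where $E_{i,j}^{(\ell)}$ is the elementary matrix supported in the $(i,j)$-entry of the $\ell$-th block, and claim that $\mathbf{B}$ is a rational RBA-basis of $A$. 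A cardinality count gives $|\mathbf{B}| = N + \sum_\ell n_\ell(n_\ell-1) = \sum_\ell n_\ell^2 = \dim A$, so the size is correct.

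The verification would proceed block-by-block, mirroring Lemma~1. Elementary matrices in distinct blocks annihilate each other, so every nontrivial product of basis elements lies within a single block and is handled exactly as in Lemma~1; multiplication of a $\mathbf{D}$-element with an off-diagonal $E_{i,j}^{(\ell)}$ produces a real scalar multiple of $E_{i,j}^{(\ell)}$, since $\mathbf{D}$-elements are real block-diagonal matrices; and the involution fixes $\mathbf{D}$ pointwise while swapping $E_{i,j}^{(\ell)}$ with $E_{j,i}^{(\ell)}$, so $\mathbf{B}^* = \mathbf{B}$. The main (and essentially only) subtle point will be axiom~(iv): the identity-coefficient of $E_{i,j}^{(\ell)} E_{r,s}^{(m)}$ is nonzero precisely when $(m,r,s)=(\ell,j,i)$, in which case the product is the primitive idempotent $E_{i,i}^{(\ell)} \in \mathcal{D}$, whose identity-coefficient in $\mathbf{D}$ is strictly positive by \cite[Lemma~2.11]{B09}, exactly as in Lemma~1. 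The remaining structure constants lie in $\mathbb{Q}$, being either inherited from the rationality of $\mathbf{D}$ or given by $\{0,\pm 1\}$-coefficients in products of elementary matrices, so $\mathbf{B}$ is the desired rational RBA-basis of $A$.
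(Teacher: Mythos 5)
Your proof is correct, but it takes a genuinely different route from the paper's. The paper deduces the corollary by induction on the number of simple components using the circle product of Section~3: at each step it realizes $\mathbb{C}\oplus M_n(\mathbb{C})$ as $\mathbb{C}[C_2]\circ_\delta M_n(\mathbb{C})$ and then glues on the inductively obtained basis of the remaining components with a second circle product, relying on \cite[Theorem 1.1]{BC} to certify that the result is again an RBA. You instead generalize Lemma~1 directly from one matrix block to several: the commutative subalgebra $\mathcal{D}$ of block-diagonal diagonal matrices is $\mathbb{C}^N$ with $N=\sum_\ell n_\ell$, the Section~2 constructions give a table algebra of dimension $N$ with rational character table (hence a rational RBA-basis of $\mathcal{D}$ realized by rational diagonal matrices), and adjoining all off-diagonal elementary matrices of each block yields the basis; the verification is block-by-block identical to Lemma~1, with inter-block products vanishing and the crucial axiom (iv)--(v) check again resting on \cite[Lemma 2.11]{B09} applied to the primitive idempotents $E_{i,i}^{(\ell)}$ of $\mathcal{D}$. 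Your argument is more elementary and self-contained---it bypasses the circle product machinery entirely and produces an explicit basis in one step---whereas the paper's inductive approach is more modular and develops tools (preservation of $F$-integrality, nonnegativity, and positive degree maps under circle products) that it reuses later, e.g.\ in Section~6. One point you should make explicit: for the mixed products $D\,E_{i,j}^{(\ell)}=p_i\,E_{i,j}^{(\ell)}$ to have rational coefficients, and for $E_{i,i}^{(\ell)}$ to be a rational combination of $\mathbf{D}$, it is not enough that $\mathbf{D}$ have rational structure constants in the abstract sense; you need $\mathbf{D}$ realized by matrices with rational entries, i.e.\ a rational first eigenmatrix. Your appeal to the rational character tables of Section~2 supplies exactly this, but the distinction deserves a sentence, since a rational RBA-basis of $\mathbb{C}^N$ can in general have irrational character values.
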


\begin{proof} 
Induct on the number of simple components of the semisimple finite-dimensional algebra $A$.  If $A$ is simple, then $A \simeq M_n(\mathbb{C})$ and so it has a rational RBA-basis as observed previously. 

If $A$ is not simple, let $A = M_n(\mathbb{C}) \oplus A_1$, where $A_1$ is a semisimple algebra with fewer components. Then $m = dim(A_1) < dim(A)$. By our inductive hypothesis, $A_1$ has a rational RBA-basis, call this $\mathbf{B}_1$.  By the previous theorem $M_n(\mathbb{C}) \oplus \mathbb{C}$ is isomorphic to the circle product $\mathbb{C}[C_2] \circ_{\delta} M_n(\mathbb{C})$. 
Let $\delta'$ be the real linear character of $\mathbb{C}C_2 \circ_{\delta} M_n(\mathbb{C}) \simeq \mathbb{C} \oplus M_n(\mathbb{C})$ with $\delta'(M_n(\mathbb{C}))=0$.  Then $\delta'(e_{\delta})=0$. If $\mathbf{B}$ is a rational RBA-basis for $M_n(\mathbb{C})$, then $$\mathbb{C}[(C_2 \circ_{\delta} \mathbf{B}) \circ_{\delta'} \mathbf{B}_1] \simeq M_n(\mathbb{C}) \oplus A_1,$$ 
since $e_{\delta'}b_1 - b_1e_{\delta'} = b_1$ and $bb_1=b_1b=\delta'(b)b_1=0$, for all $b \in \mathbf{B}$ and $b_1 \in \mathbf{B}_1\setminus \{1\}$. 
\end{proof}

\section{Noncommutative algebras with $|Irr(A)|=2$} 

We will require the following well-known facts concerning the character theory of RBAs.  These have appeared in various forms in the literature over the years (see for example \cite{B95}, \cite{B10}, or \cite{AFM}), but first appeared in this generality in work of Higman \cite{Hig87} describing the character theory of semisimple involutive algebras with a $*$-closed basis. 

\begin{proposition}\label{ctorba}
Let $(A,\mathbf{B})$ be an RBA with respect to the involution $*$, and suppose $\delta$ is a positive degree map on $A$.  Let $Irr(A)$ be the set of irreducible characters of $A$, and for each $\chi \in Irr(A)$, let $m_{\chi}$ be the multiplicity of $\chi$ in the standard feasible trace $\tau$ of $A$, and let $e_{\chi}$ be the centrally primitive idempotent of $A$ for which $\chi(e_{\chi})=\chi(1)>0$.  Then the following hold: 

\begin{enumerate} 
\item (Positive multiplicities) For all $\chi \in Irr(A)$, $m_{\chi} > 0$. 

\item (Idempotent character formula) For all $\chi \in Irr(A)$, $e_{\chi} = \displaystyle{ \frac{m_{\chi}}{\delta(\mathbf{B}^+)} \sum_i \frac{\chi(b_i^*)}{\lambda_{ii^*0}}b_i}$. 

\item (Orthogonality relations) For all $\chi, \psi \in Irr(A)$, $\chi(e_{\psi}) = \delta_{\chi \psi} \chi(1).$

\end{enumerate}
\end{proposition}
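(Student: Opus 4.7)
The plan is to use the standard feasible trace $\tau$ as the main tool throughout, exploiting three of its properties: it is a class function on $A$, it gives rise to a nondegenerate form $\tau(y^*x)$ for which the basis $\mathbf{B}$ is orthogonal, and the positivity $\tau(x^*x)>0$ for nonzero $x$ severely restricts how $*$ can permute the simple components of $A$.

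I would first dispatch (iii), which is really the general structure theory of semisimple complex algebras: decomposing $A=\bigoplus_{\psi\in Irr(A)}A_\psi$ with $A_\psi\cong M_{\psi(1)}(\mathbb{C})$, the centrally primitive idempotent $e_\psi$ is the identity of $A_\psi$, and every other character vanishes on $A_\psi$. Since $\tau$ is a feasible trace on a semisimple algebra it is a class function, and so decomposes uniquely as $\tau=\sum_\psi m_\psi\psi$; combining this with (iii) yields the key evaluation $\tau(e_\chi)=m_\chi\chi(1)$.

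For (ii), I would use RBA axioms (iv) and (v) together with the formula $\tau(z)=\delta(\mathbf{B}^+)z_0$ to compute directly that $\tau(b_j^*b_i)=\delta(\mathbf{B}^+)\lambda_{j^*i0}$, which vanishes unless $i=j$ and equals $\delta(\mathbf{B}^+)\lambda_{ii^*0}$ when $i=j$. Hence the coefficient of $b_i$ in any $z\in A$ is $\tau(b_i^*z)/(\delta(\mathbf{B}^+)\lambda_{ii^*0})$. Taking $z=e_\chi$, using centrality of $e_\chi$, and applying the decomposition of $\tau$ give $\tau(b_i^*e_\chi)=m_\chi\chi(b_i^*)$, from which the claimed formula falls out immediately.

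Finally, for (i), I would apply the positivity of $\tau$ to the nonzero element $e_\chi$. Because $*$ is an antiautomorphism restricting to complex conjugation on scalars, a short check shows $e_\chi^*$ is again a centrally primitive idempotent, so $e_\chi^*=e_{\chi'}$ for some $\chi'\in Irr(A)$; but $e_{\chi'}e_\chi=0$ whenever $\chi'\ne\chi$, which would force $\tau(e_\chi^*e_\chi)=0$ and contradict positivity. Thus $e_\chi^*=e_\chi$, so $\tau(e_\chi)=\tau(e_\chi^*e_\chi)>0$, and combined with $\tau(e_\chi)=m_\chi\chi(1)$ and $\chi(1)>0$ this yields $m_\chi>0$. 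The main step that genuinely uses the RBA axioms beyond general semisimple theory is this positivity argument showing that $*$ must fix each centrally primitive idempotent individually; once that is in hand, the rest of the proposition reduces to linear algebra in the orthogonal basis $\mathbf{B}$.
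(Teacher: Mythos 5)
Your proof is correct; the paper gives no proof of this proposition at all (it cites Higman \cite{Hig87} and the table-algebra literature), and your argument is the standard one from those sources: orthogonality of $\mathbf{B}$ under the form $\tau(y^*x)$ yields (ii), and positivity of $\tau(x^*x)$ applied to $e_\chi$ yields (i). Note that your positivity step also establishes $e_\chi^*=e_\chi$, a fact the paper records separately immediately after the proposition, so nothing is missing.
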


The positive degree map $\delta$ is an irreducible character of $A$, and $m_{\delta}=1$.   For later use, we note that since our involution extends complex conjugation on scalars, we have $e_{\chi}^* = e_{\chi}$, for all $\chi \in Irr(A)$.   When $\psi, \chi \in Irr(A)$ with $\psi \ne \chi$, the fact that $\psi(xe_{\chi})=0$, for all $x \in A$ implies that $\tau(x^*xe_{\chi}) = m_{\chi} \chi(x^*x)$, for all $x \in A$.   It then follows from Proposition \ref{ctorba}(i) that $\chi(x^*x)\ge 0$ for all $x \in A$.

The referee has remarked that the next theorem is a corollary to \cite[Theorem 1]{B09}.  The proof provided here is independent of this result.
 
\begin{thm} Let $(A,\mathbf{B})$ be a standard integral RBA with a positive degree map.   

If $|Irr(A)| = 2$, then $|\mathbf{B}|=2$.  
\end{thm}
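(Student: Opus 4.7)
The plan is to push the $|Irr(A)|=2$ hypothesis through character orthogonality and integrality to bound $|\mathbf{B}|$. Write $Irr(A) = \{\delta, \chi\}$ and put $n = \chi(1)$; then $A \cong \mathbb{C} \oplus M_n(\mathbb{C})$ and $|\mathbf{B}| = \dim A = n^2 + 1$, so the conclusion $|\mathbf{B}| = 2$ amounts to forcing $n = 1$.

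First I would decompose the standard feasible trace as $\tau = m_\delta \delta + m_\chi \chi = \delta + m_\chi \chi$, using $m_\delta = 1$ as noted following Proposition~\ref{ctorba}. Combining $\tau(b_i) = \delta(\mathbf{B}^+)\,\delta_{i,0}$ with the standardness identity $\delta(b_i) = \lambda_{ii^*0}$ yields
$$ \chi(b_i) \;=\; -\frac{\delta(b_i)}{m_\chi}, \qquad \text{for every } i > 0. $$
Applying the idempotent character formula to $\delta$ gives $e_\delta = \mathbf{B}^+/\delta(\mathbf{B}^+)$, so the orthogonality relation $\chi(e_\delta)=0$ translates to $\chi(\mathbf{B}^+)=0$; summing the previous display then pins down $m_\chi = (\delta(\mathbf{B}^+)-1)/n \in \mathbb{Q}$, and consequently each $\chi(b_i)$ lies in $\mathbb{Q}$.

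The decisive step, and where I expect the real work, is promoting these rational values to negative integers. Since $\mathbb{Z}\mathbf{B}$ is a subring of $A$ with integer multiplication table, each $b_i$ satisfies the monic integer characteristic polynomial of its action by left multiplication, hence is integral over $\mathbb{Z}$; this makes $\chi(b_i)$ an algebraic integer, and combined with rationality we obtain $\chi(b_i) \in \mathbb{Z}$. Positivity of $\delta(b_i)$, $m_\chi$, and $n$ then forces $\chi(b_i) \leq -1$ for every $i > 0$. The finish is a counting one step: there are exactly $n^2$ nonidentity basis elements in $\mathbf{B}$, so summing gives
$$ -n \;=\; \chi(\mathbf{B}^+) - \chi(1) \;=\; \sum_{i > 0} \chi(b_i) \;\leq\; -n^2, $$
which forces $n \leq 1$, hence $n = 1$ and $|\mathbf{B}| = 2$.
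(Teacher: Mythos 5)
Your proposal is correct and follows essentially the same route as the paper's proof: decompose the standard feasible trace as $\delta + m_\chi\chi$, deduce that $\chi(b_i)$ is a negative rational for $i>0$, upgrade to $\chi(b_i)\le -1$ via algebraic integrality from the integer structure constants, and then use $\chi(\mathbf{B}^+)=0$ together with the count $|\mathbf{B}|-1=\chi(1)^2$ to force $\chi(1)=1$. The only difference is notational (you let $n=\chi(1)$ while the paper uses $n$ for the order $\delta(\mathbf{B}^+)$), and you spell out the integrality and rationality steps slightly more explicitly than the paper does.
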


\begin{proof} 
Let $Irr(A) = \{\delta,\chi\}$.  Let $e_{\delta}$ and $e_{\chi}$ be the two centrally primitive idempotents of $A$.  We can assume that the distinguished basis $\mathbf{B}$ is a standardized basis, so we have $\delta(b_i) = \lambda_{ii^*0}$ for $i=0,1,\dots,d$.  Let $n = \delta(\mathbf{B}^+)$ be the order of $\mathbf{B}$. Since $\mathbf{B}$ is an integral RBA basis we have that $\delta_i \in \mathbb{Z}^+$.  By Proposition \ref{ctorba}, we have that
$$ e_{\delta} = \frac{1}{n}\sum_i b_i, \mbox{ and } 
e_{\chi} = \frac{m_{\chi}}{n} \sum_i \frac{\chi(b_i^*)}{\delta_i} b_i, $$
for some positive real number $m_{\chi}$.   

Since $|Irr(A)|=2$, $e_1 + e_{\chi} = b_0$.  From this one can show that $n = 1 + m_{\chi}\chi(b_0)$, and for $b_i \ne b_0$, $\chi(b_i) = -\frac{\delta_{i^*}}{m_{\chi}}$. 
In particular, $\chi(b_i)$ is a negative rational number when $b_i \ne b_0$.  Since $\chi(b_i)$ is an algebraic integer whenever the structure constants for the basis $\mathbf{B}$ are integers, all of the $\chi(b_i)$'s are in fact integers, and so $\chi(b_i) \le -1$ for $i>0$.  

By the orthogonality relations, 
$$ 0 = \sum_i \chi(b_i) = \chi(b_0) + \sum_{i =1}^d \chi(b_i) \le  \chi(b_0) - (|\mathbf{B}| - 1). $$
Since $|\mathbf{B}| = 1 + \chi(b_0)^2$, it follows that $\chi(b_0) \ge \chi(b_0)^2$.  Since $\chi(b_0)$ is a positive integer, this forces $\chi(b_0) = 1$, and hence $|\mathbf{B}|=2$, as required. 
\end{proof}

One interpretation of the preceding result is that any noncommutative semisimple algebra that has an integral RBA$^{\delta}$-basis must have at least 3 simple components.  In particular, the noncommutative $5$-dimensional semisimple algebra over $\mathbb{C}$ does not have an integral RBA$^{\delta}$-basis. 

\section{The noncommutative $5$-dimensional semisimple algebra}

The results of Section 4 do not tell us if the algebras $\mathbb{C} \oplus M_n(\mathbb{C})$ for $n \ge 2$ under the conjugate-transpose involution have non-integral RBA-structures that admit a positive degree map.  In this section we will consider this question for the $5$-dimensional algebra $A = \mathbb{C} \oplus M_2(\mathbb{C})$.  

\begin{lemma} \label{involu} Let $A = \mathbb{C} \oplus M_2(\mathbb{C})$, and let $\delta$ be the algebra projection map onto its one-dimensional component.  Suppose $\mathbf{B}$ is an RBA-basis of $A$ for which $\delta$ takes positive values on $\mathbf{B}$; i.e.~$\delta$ is a positive degree map.  Then the algebra  ${\mathbb R}\mathbf{B}$ is isomorphic to ${\mathbb R}\oplus M_2({\mathbb R})$ and, up to a change of basis, ${}^*$ acts on $M_2({\mathbb R})$ as matrix transposition.  In particular, $\mathbf{B}$ has exactly three ${}^*$-fixed elements.
\end{lemma}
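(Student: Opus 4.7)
The plan is to first identify $\mathbb{R}\mathbf{B}$ as a real semisimple subalgebra of $A$ of dimension $5$, then use parity of $*$-fixed basis elements together with the positivity of the feasible trace form to pin down its structure and the action of $*$.

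Since the structure constants of $\mathbf{B}$ are real, $\mathbb{R}\mathbf{B}$ is a $5$-dimensional $\mathbb{R}$-subalgebra of $A$ whose complexification is $A$, hence semisimple. By the idempotent formula of Proposition \ref{ctorba}(ii), the central idempotent $e_\delta$ is a real combination of $\mathbf{B}$, so $e_\delta$ and $e_\chi = 1 - e_\delta$ both lie in $\mathbb{R}\mathbf{B}$. The two simple factors of $A$ have different dimensions, so $*$ preserves each of them, giving $\mathbb{R}\mathbf{B} = \mathbb{R} e_\delta \oplus e_\chi\mathbb{R}\mathbf{B}$ with $e_\chi\mathbb{R}\mathbf{B}$ a $4$-dimensional real form of $M_2(\mathbb{C})$ --- so isomorphic to either $M_2(\mathbb{R})$ or the quaternions $\mathbb{H}$.

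Two constraints now enter. \emph{(Parity)} A $*$-closed basis of size $5$ has an odd number $f \in \{1,3,5\}$ of $*$-fixed elements, so $\dim_\mathbb{R}(\mathbb{R}\mathbf{B})^* = (f+5)/2 \in \{3,4,5\}$, and the $*$-fixed subspace of $e_\chi\mathbb{R}\mathbf{B}$ has dimension in $\{2,3,4\}$. \emph{(Positivity)} For $x \in e_\chi A$ the identity $\tau(x^*x) = m_\chi\chi(x^*x)$ noted in the preamble, together with $\tau(x^*x) > 0$ and $m_\chi > 0$, gives $\mathrm{tr}(x^*x) > 0$ for every nonzero $x \in e_\chi\mathbb{R}\mathbf{B}$. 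Parity alone eliminates the involutions whose fixed subspace on the $4$-dimensional factor has dimension $1$: the standard (symplectic) conjugation on $\mathbb{H}$ and the symplectic type on $M_2(\mathbb{R})$. The remaining first-kind involutions have $3$-dimensional fixed subspace: on $\mathbb{H}$ these are the orthogonal involutions $\sigma(x) = q\bar x q^{-1}$ with $q$ a nonzero pure quaternion, and on $M_2(\mathbb{R})$ they are $x \mapsto Sx^tS^{-1}$ with $S$ symmetric. A direct calculation (e.g.\ $q = i$ and $x = j$) shows $\mathrm{tr}(\sigma(x)x)$ takes negative values on the first family, ruling out $\mathbb{H}$; on $M_2(\mathbb{R})$ the same positivity forces $S$ to be (positive or negative) definite.

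Hence $e_\chi\mathbb{R}\mathbf{B} \cong M_2(\mathbb{R})$, and writing $S = \pm gg^t$ and conjugating by $g$ transforms $*$ into matrix transposition. The $*$-fixed subspace of $\mathbb{R}\mathbf{B}$ then has dimension $1 + 3 = 4$, so $f = 3$. I expect the main obstacle to be the elimination of the quaternion algebra with an orthogonal-type involution: this case has the correct odd parity of $*$-fixed basis elements and can only be ruled out by computing the signature of the trace form, whereas every other alternative is killed by parity alone.
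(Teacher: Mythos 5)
Your proposal is correct and follows essentially the same route as the paper's proof: decompose $\mathbb{R}\mathbf{B}$ as $\mathbb{R}\oplus(\text{a $4$-dimensional central simple real algebra})$, use the odd count of $*$-fixed basis elements to bound the dimension of the $*$-fixed subspace from below (killing the symplectic-type involutions, i.e.\ quaternion conjugation and antisymmetric $S$), and use the positivity $\chi(x^*x)\ge 0$ to kill the orthogonal involutions on $\mathbb{H}$ and the indefinite symmetric $S$ on $M_2(\mathbb{R})$. The only cosmetic differences are that you quote the orthogonal/symplectic classification of involutions of the first kind where the paper derives it by hand from Skolem--Noether, and you leave $f=5$ in the initial parity list where the paper excludes it by noncommutativity; neither affects the argument.
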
  

\begin{proof} 
By rescaling we can assume $\mathbf{B} = \{b_0=1,b_1,b_2,b_3,b_4\}$ is a standardized RBA$^{\delta}$-basis of $A$.  Set $\delta(b_i)=\delta_i$, and let $n$ be the order of $\mathbf{B}$, so $n = 1 + \delta_1 + \delta_2 + \delta_3 + \delta_4$.   
Since $A$ is non-commutative, the basis $\mathbf{B}$ contains at least one pair $b_i,b_i^*$ of non-symmetric elements. Therefore the number of ${}^*$-fixed elements of ${\mathbf B}$ is either $1$ or $3$. In the first case the dimension of ${}^*$-fixed subspace of $\mathbb{R}\mathbf{B}$ is $3$ while in the second one it is equal to $4$.

The algebra ${\mathbb R}\mathbf{B}$ is a non-commutative semisimple algebra over the reals of dimension $5$. Therefore either ${\mathbb R}\mathbf{B}\cong {\mathbb R}\oplus{\mathbb H}$ or ${\mathbb R}\mathbf{B}\cong \mathbb{R} \oplus M_2({\mathbb R})$.  If $\mathbb{R}\mathbf{B}\cong\mathbb{R}\oplus \mathbb{H}$, then by the Skolem-Noether theorem the action of ${}^*$ on $\mathbb{H}$ has the following form: $x^* = h^{-1} \bar{x} h$ for some unit quaternion $h\in\mathbb{H}$ (here $\bar{x}$ is the standard quaternion conjugation). It follows from $(x^*)^*=x$ that $h^2$ is a scalar quaternion. Therefore $h$  is either scalar (i.e.~an element of $Z(\mathbb{H})\cong\mathbb{R}$) or purely imaginary quaternion.  In the first case $x^*=\bar{x}$ for all $x \in \mathbb{H}$, implying that the dimension of ${}^*$-fixed subspace in $\mathbb{R}\mathbf{B}$ is two which is impossible.  In the second case, we have that $h^*=-h$.   Since $h$ is a purely imaginary unit quaternion, there exists non-zero purely imaginary unit quaternion $q \in \mathbb{H}$ such that $\bar{q}h = - h\bar{q}$.  For this $q$ we have $q^*=-\bar{q}=	q$.   Let $\chi$ be the character of the unique irreducible $\mathbb{H}$-module up to isomorphism; that is, $\chi(1_\mathbb{H})=4,\chi(i)=\chi(j)=\chi(k)=0$.   Then $\tau(x)=\delta(x)+m_\chi \chi(x)$, for all $x \in \mathbb{R}\mathbf{B}$, where $m_\chi = \frac{n-1}{4}$.  
We know $\chi(x^* x)\geq 0$ for each $x\in\mathbb{H}$, but $\chi(q^* q) = \chi(-\bar{q} q) < 0$, a contradiction.  This excludes the case of $\mathbb{R}\mathbf{B}\cong\mathbb{R}\oplus\mathbb{H}$, so we must have that $\mathbb{R}\mathbf{B} \simeq \mathbb{R} \oplus M_2(\mathbb{R})$.

Let $\Delta:{\mathbb R}\mathbf{B}\rightarrow M_2({\mathbb R})$ be the two-dimensional irreducible representation of ${\mathbb R}\mathbf{B}$ given by projection to the component $M_2({\mathbb R})$.  Let $\chi$ be the character corresponding to this representation.  By Proposition \ref{ctorba}, $n = \delta(1) + m_{\chi}\chi(1)$, so $m_{\chi}=\frac{n-1}{2}$.  We have that $\Delta(x^*)^\top$ is a $2$-dimensional irreducible representation equivalent to $\Delta$.  Thus there exists an $S \in GL_2({\mathbb R})$ such that $\Delta(x^*)^\top = S^{-1} \Delta(x) S$. Equivalently, $\Delta(x^*) = S^\top \Delta(x)^\top (S^{-1})^\top$. Substituting $x^*$ intead of $x$ we obtain that $\Delta(x) = (S^\top S^{-1}) \Delta(x) (S^\top S^{-1} )^{-1}$ holds for each $x\in {\mathbb R}\mathbf{B}$. Combining this together with $\Delta({\mathbb R}\mathbf{B}) = M_2({\mathbb R})$ we obtain that $S^{-1}S^\top = \alpha I_2$ for some $\alpha \in {\mathbb R}$. It follows from $S^\top =\alpha S$ and  $(S^\top)^\top = S$ that $\alpha =\pm 1$, i.e.~$S$ is either symmetric or antisymmetric.

Assume first that $S$ is antisymmetric, that is $S = \begin{bmatrix} 0 & a \\ -a & 0 \end{bmatrix}$. A direct check shows that in this case the map $X\mapsto S X^\top S^{-1}, X\in M_2({\mathbb R})$ has a one-dimensional space of fixed points. So, in this case the dimension of the ${}^*$-fixed subspace of $\mathbb{R}\mathbf{B}$ is two, a contradiction.

Assume now that $S$ is symmetric. Then $S=P^\top D P$ for some $D \in \{I_2, Diag(1,-1),-I_2\}$ and 
$P\in GL_2({\mathbb R})$.  Replacing $\Delta(x)$ by the equivalent representation $\Sigma(x):=(P^{-1})^\top \Delta(x) P^\top$ we obtain  $\Sigma(x^*)= D \Sigma(x)^\top D^{-1}$.  If $D=\pm I_2$, then we are done.  It remains to deny the case of $D=Diag(1,-1)$.  

We know that $tr(\Delta(x)\Delta(x^*)) = \chi(xx^*) \geq 0$ for all $x\in {\mathbb R}\mathbf{B}$.  Since $\Sigma:{\mathbb R}\mathbf{B}\rightarrow M_2({\mathbb R})$ is an epimorphism, we conclude that $tr(X D X^\top D^{-1})\geq 0$ holds for all $X\in M_2(\mathbb{R})$. Now choosing $X=\begin{bmatrix} 0 & 1 \\ 1 & 0 \end{bmatrix}$ we get a contradiction.
\end{proof} 

The above lemma tells us that, since $\mathbb{R}\mathbf{B} \simeq \mathbb{R} \oplus M_2(\mathbb{R})$, we can replace $A$ by an isomorphic image whose standardized RBA$^{\delta}$-basis is of the form 
$$\mathbf{B} = \{b_0=(1,I_2), b_1=(\delta_1,B_1)=b_1^*, b_2=(\delta_2,B_2)=b_2^*, b_3 = (\delta_3,B_3), b_4 = b_3^* = (\delta_3,B_3^{\top}) \}, $$ 
and all entries of the matrices $B_1$, $B_2$, and $B_3$ are real. Label the entries of the matrices $B_1$, $B_2$, and $B_3$ so that 
$$ B_1 = \begin{bmatrix} a & b \\ b & d \end{bmatrix}, B_2 = \begin{bmatrix} v & w \\ w & x \end{bmatrix}, B_3 = \begin{bmatrix} r & s \\ t & u \end{bmatrix}, \mbox{ and } B_4=B_3^{\top}. $$   
The centrally primitive idempotents of $A$ are $e_{\delta}=(1,\mathbf{0})$ and $e_{\chi} = (0,I)$.  By Proposition \ref{ctorba} we have   
$$ (1,\mathbf{0}) = \frac{1}{n}\sum_i (\delta_i,B_i) $$ 
and 
$$ (0,I) = \frac{m_{\chi}}{n} \sum_i tr(B_i^{\top})(1,\frac{1}{\delta_i}B_i). $$
These give us the conditions $\sum_i B_i = 0$ and $\sum_i \frac{tr(B_i^{\top})}{\delta_i}B_i = \frac{n}{m_{\chi}}I$.  Since $(1,0)+(0,I)=b_0$, the coefficient of $b_i$ in $(0,I)$ for $i>0$ must be the negative of its coefficient in $(1,0)$.  Therefore, $\frac{-1}{n} = \frac{m_{\chi}tr(B_i^{\top})}{n\delta_i}$ for $i>0$, and hence $\frac{tr(B_i^{\top})}{\delta_i}=\frac{-2}{n-1}$ for $i>0$.  So our character-theoretic identities are:  
$$\begin{array}{ll}
 & 1 + a + v + 2r = 0, \\
 & 1 + d + x + 2u = 0, \\
 & b + w + s + t = 0, \\
 & 2 + \frac{(a+d)}{\delta_1}a + \frac{(v+x)}{\delta_2}v + \frac{2(r+u)}{\delta_3}r = \frac{n}{m_{\chi}}, \\
 & 2 + \frac{(a+d)}{\delta_1}d + \frac{(v+x)}{\delta_2}x + \frac{2(r+u)}{\delta_3}u = \frac{n}{m_{\chi}}, \mbox{ and }\\
 & \frac{(a+d)}{\delta_1} = \frac{(v+x)}{\delta_2} = \frac{(r+u)}{\delta_3} = \frac{-2}{n-1}.
\end{array}$$

The conditions for linear independence of $\mathbf{B}$ and these equations imply that $a \ne d$ or $v \ne x$, at least one of $b$ or $w$ is nonzero, and $s \ne t$.  By Lemma \ref{involu}, we can apply a change of basis to diagonalize the symmetric matrix $B_1$ and assume $b=0$.  

We are able to produce RBA$^{\delta}$-bases with real matrix entries that satisfy all of these conditions.  The main result of this section describes all of these matrix entries in terms of the degrees of basis elements and some sign choices.  Since $\mathbb{R}\mathbf{B} \simeq \mathbb{R} \oplus M_2(\mathbb{R})$, this theorem characterizes all standardized $RBA^{\delta}$-bases of $\mathbb{C} \oplus M_2(\mathbb{C})$ up to equivalence.  

\begin{thm}\label{nc5d}
Suppose
$$ \big\{ (1,I), (\delta_1,\begin{bmatrix} a & 0 \\ 0 & d \end{bmatrix}), (\delta_2, \begin{bmatrix} v & w \\ w & x \end{bmatrix}), (\delta_3, \begin{bmatrix} r & s \\ t & u \end{bmatrix}, (\delta_3, \begin{bmatrix} r & t \\ s & u \end{bmatrix}) \big\} $$
is a standardized RBA$^{\delta}$-basis of $\mathbb{C} \oplus M_2(\mathbb{C})$ with respect to the conjugate-transpose involution, all of whose matrix entries are real.  Let $\varepsilon_1, \varepsilon_2, \varepsilon_3 = \pm 1$ be three sign choices.  Then the matrix entries satisfy the identities

$$\begin{array}{rclcrcl} 
a &=& \displaystyle{-\frac{\delta_1}{n-1} + \varepsilon_1\frac{\sqrt{n\delta_1(n-1-\delta_1)}}{n-1}}, & \qquad & d &=& \displaystyle{-\frac{\delta_1}{n-1} - \varepsilon_1\frac{\sqrt{n\delta_1(n-1-\delta_1)}}{n-1}}, \\
& & & & & & \\
v &=& \displaystyle{-\frac{\delta_2}{n-1} - \varepsilon_1\frac{n\delta_1\delta_2}{ (n-1)\sqrt{n\delta_1(n-1-\delta_1) }}}, &  & 
x &=& \displaystyle{-\frac{\delta_2}{n-1} + \varepsilon_1\frac{n\delta_1\delta_2}{ (n-1)\sqrt{n\delta_1(n-1-\delta_1) }}}, \\
w &=& \displaystyle{\varepsilon_2 \sqrt{\frac{2\delta_2\delta_3}{(n-1)(n-1-\delta_1)}}}, & & & & \\
\end{array}$$

$$\begin{array}{rclcrcl} 
r &=& \displaystyle{-\frac{\delta_3}{n-1} - \varepsilon_1\frac{n\delta_1\delta_3}{ (n-1)\sqrt{n\delta_1(n-1-\delta_1) }}}, & & 
u &=& \displaystyle{-\frac{\delta_3}{n-1} + \varepsilon_1\frac{n\delta_1\delta_3}{ (n-1)\sqrt{n\delta_1(n-1-\delta_1) }}}, \\
& & & & & & \\
s &=& \displaystyle{-\frac{w}{2} + \varepsilon_3 \sqrt{\frac{\delta_3 n}{2(n-1)}}}, & \mbox{ and } & 
t &=& \displaystyle{-\frac{w}{2} - \varepsilon_3 \sqrt{\frac{\delta_3 n}{2(n-1)}}}.
\end{array}$$

Conversely, given positive real numbers $n$, $\delta_1$, $\delta_2$, and $\delta_3$ satisfying $n=1+\delta_1+\delta_2+2\delta_3$ and three choices of sign for $\varepsilon_1$, $\varepsilon_2$, and $\varepsilon_3$, the above identities produce an RBA$^{\delta}$-basis of $\mathbb{C} \oplus M_2(\mathbb{C})$ having real matrix entries. 
\end{thm}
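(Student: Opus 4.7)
The plan is to augment the character-theoretic identities already listed with additional orthogonality relations coming from the standard feasible trace $\tau$. Since $A$ has two irreducible characters, $\delta$ and the character $\chi$ of the projection to $M_2(\mathbb{C})$, the decomposition $\tau = \delta + m_{\chi}\chi$ with $m_{\chi} = (n-1)/2$ combined with $\tau(\sum_k x_k b_k) = n x_0$ and the standardization $\lambda_{ii^*0} = \delta_i$ gives $\tau(b_i b_j^*) = n\delta_{ij}\delta_i$, hence
$$ tr(B_i B_j^\top) = \frac{2\delta_i(n\delta_{ij} - \delta_j)}{n-1} $$
for all $i, j \in \{0,1,2,3,4\}$. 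These identities supply the additional quadratic constraints needed to pin down the unknown matrix entries.

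I would then solve the system in stages. Applying the identity with $i = j = 1$ gives $a^2 + d^2 = 2\delta_1(n - \delta_1)/(n-1)$; combined with $a + d = -2\delta_1/(n-1)$ this is a quadratic whose roots are the claimed values of $a, d$, with $\varepsilon_1$ recording the labeling choice. The case $i = 1, j = 2$ produces the linear equation $av + dx = -2\delta_1 \delta_2/(n-1)$, which together with $v + x = -2\delta_2/(n-1)$ determines $v$ and $x$ in terms of $a, d$; the case $i = 1, j = 3$ yields $r, u$ in the same way. Next, $tr(B_2^2) = v^2 + 2w^2 + x^2 = 2\delta_2(n - \delta_2)/(n-1)$ determines $w^2$ and so $w$ up to the sign $\varepsilon_2$. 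Finally, $s + t = -w$ (from $b + w + s + t = 0$ with $b=0$) combined with $tr(B_3^2) = r^2 + 2st + u^2 = -2\delta_3^2/(n-1)$ (the identity with $i=3, j=4$) gives $st$, and the resulting quadratic in $s$ produces the claimed formulas up to the sign $\varepsilon_3$.

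For the converse, one needs to verify that the stated formulas genuinely yield an RBA$^\delta$-basis. The key observation is that $\tau(b_i b_j^*) = n \lambda_{ij^*0}$, so the orthogonality identities above are precisely equivalent to $\lambda_{ij0} = \delta_{j,i^*}\delta_i$, which are the RBA axioms (iv) and (v). The condition $\mathbf{B}^2 \subseteq \mathbb{R}\mathbf{B}$ is automatic, since the $\lambda_{ijk}$ are uniquely determined by expanding each $b_i b_j$ in the basis and all input data are real, and linear independence of $\mathbf{B}$ follows from non-degeneracy of the form $(x,y) \mapsto \tau(xy^*)$ on $A$. The main obstacle is thus a routine but tedious algebraic verification: one must check that the remaining orthogonality relations (e.g., for $tr(B_2 B_3^\top)$, $tr(B_2 B_4^\top)$, and $tr(B_3 B_3^\top)$) hold automatically for the derived expressions, using the constraint $n = 1 + \delta_1 + \delta_2 + 2\delta_3$ to collapse the resulting polynomial identities.
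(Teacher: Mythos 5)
Your proof is correct, but it is organized around a different key identity than the paper's. The paper's engine is the Cayley--Hamilton consequence $x^2\in\mathrm{span}(b_0,x,\mathbf{B}^+)$, which yields the full $2\times 2$ matrix identities $B(x)^2=-\tfrac{2\delta(x)}{n-1}B(x)-\tfrac{(n+1)\delta(x)^2-\kappa(x)n(n-1)}{(n-1)^2}I_2$ and their polarization; these are then read off \emph{entrywise} (e.g.\ $v,x$ come from the diagonal entries of the anticommutator $B(b_1)B(b_2)+B(b_2)B(b_1)$, and $s-t$ from squaring $B(b_3-b_4)$). You instead use only the scalar Gram-matrix relations $\mathrm{tr}(B_iB_j^\top)=\tfrac{2\delta_i(n\delta_{ij}-\delta_j)}{n-1}$ --- which are exactly the traces of the paper's matrix identities --- combined with the linear relations $\mathrm{tr}(B_i)=-\tfrac{2\delta_i}{n-1}$ and $\sum_iB_i=0$. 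I checked your six stages: they do reproduce the stated values of $a,d,v,x,w,r,u,s,t$ (your count of nine scalar equations for nine unknowns is right, and $a\ne d$ guarantees the linear systems for $v,x$ and $r,u$ are nonsingular); note that your computation gives $w^2=\tfrac{2n\delta_2\delta_3}{(n-1)(n-1-\delta_1)}$, agreeing with the paper's proof and Example 17 rather than with the displayed statement, which is missing a factor of $n$. Your converse is genuinely slicker than the paper's: the paper verifies the converse by computing all structure constants explicitly, whereas you reduce everything to the Gram matrix of $\mathbf{B}$ under $\langle x,y\rangle=n^{-1}\tau(xy^*)$ being $\mathrm{diag}(1,\delta_1,\delta_2,\delta_3,\delta_3)$, of which only two entries ($\mathrm{tr}(B_2B_3)$ and $\mathrm{tr}(B_3B_3^\top)$) are not automatic from the derivation; both do collapse to $0=0$ using $n-1=\delta_1+\delta_2+2\delta_3$. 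Two small points to tighten: (1) in the converse you must \emph{define} $\tau:=\delta+\tfrac{n-1}{2}\chi$ and check $\tau(b_i)=n\delta_{i0}$ from the formulas before you may identify $\tau(b_ib_{j^*})$ with $n\lambda_{ij^*0}$; (2) linear independence does not follow from nondegeneracy of the form on $A$ alone, but from the fact that the Gram matrix of the five proposed vectors is diagonal with nonzero entries --- which is what your orthogonality relations give you.
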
 

Before beginning the proof of this theorem, we establish some preliminaries. Let $\tau(\sum_{i} x_i b_i) = n x_0, x = \sum_{i} x_i b_i\in A$ be the standard feasible trace of $A$.  Notice that 
$\tau(x) = \delta(x) +\frac{n-1}{2}\chi(x)$. We denote by $B(x)$ the $2$-dimensional matrix corresponding to the character $\chi$ and by $r(x),s(x)$ the eigenvalues of $B(x)$, for all $x \in A$. Clearly $\chi(x) = r(x)+s(x)$.

\begin{lemma}\label{040315d}
For each $x \in A$ we have $x^2\in span_{\mathbb{C}}( b_0,x,\mathbf{B}^+ )$, where $\mathbf{B}^+=b_0+...+b_4$.
\end{lemma}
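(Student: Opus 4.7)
The plan is to reduce to the Cayley--Hamilton theorem on the $2\times 2$ component of $A$, after first identifying $\mathrm{span}_{\mathbb{C}}(b_0, \mathbf{B}^+)$ with the center $Z(A)$.

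The first step is to show $\mathrm{span}_{\mathbb{C}}(b_0, \mathbf{B}^+) = Z(A)$. Applying the idempotent character formula of Proposition~\ref{ctorba}(ii) to the degree map $\delta$ itself, with $m_\delta = 1$, $\delta(b_i^*) = \delta_i = \lambda_{ii^*0}$ (because $\mathbf{B}$ is standardized) and $\delta(\mathbf{B}^+) = n$, collapses the formula to $e_\delta = \frac{1}{n}\mathbf{B}^+$. Combined with $e_\chi = b_0 - e_\delta$, this shows that $\mathrm{span}_{\mathbb{C}}(b_0, \mathbf{B}^+) = \mathrm{span}_{\mathbb{C}}(e_\delta, e_\chi)$, which is exactly the two-dimensional center $Z(A) \cong \mathbb{C} \oplus \mathbb{C}$.

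The second step uses the $A = \mathbb{C} \oplus M_2(\mathbb{C})$ decomposition directly. Writing $x = (\delta(x), B(x))$, the matrix $B(x)$ has characteristic polynomial $t^2 - \chi(x) t + r(x) s(x)$, so Cayley--Hamilton gives $B(x)^2 = \chi(x) B(x) - r(x) s(x) I$. Therefore
$$ x^2 - \chi(x)\, x = \bigl( \delta(x)^2 - \chi(x)\delta(x),\; -r(x)s(x)\, I \bigr), $$
which is a $\mathbb{C}$-linear combination of $e_\delta = (1,0)$ and $e_\chi = (0,I)$, hence lies in $Z(A) = \mathrm{span}_{\mathbb{C}}(b_0, \mathbf{B}^+)$. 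Rearranging gives $x^2 \in \mathrm{span}_{\mathbb{C}}(b_0, x, \mathbf{B}^+)$, as desired.

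There is no serious obstacle here: the only slightly nonobvious point is the recognition that $\mathbf{B}^+$ is proportional to $e_\delta$, so that adjoining $\mathbf{B}^+$ to $\mathbb{C} b_0$ recovers the whole center of $A$; once that is in hand, the conclusion is immediate from the fact that any $2\times 2$ matrix satisfies a quadratic relation over its own scalar multiples.
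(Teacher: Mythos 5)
Your proof is correct and follows essentially the same route as the paper's: both arguments come down to the Cayley--Hamilton identity $B^2 = \mathrm{tr}(B)B - \det(B)I_2$ applied in the $M_2(\mathbb{C})$ component, together with the observation that $\mathbf{B}^+$ is a scalar multiple of $e_\delta$ (the paper phrases this by multiplying $x$ by the idempotent $b_0 - n^{-1}\mathbf{B}^+ = e_\chi$, while you work directly with the pair decomposition $x = (\delta(x), B(x))$ and the center $\mathrm{span}_{\mathbb{C}}(e_\delta, e_\chi)$). The two presentations differ only cosmetically.
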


\begin{proof} 
The ideal $(b_0 - n^{-1}\mathbf{B}^+)A$ is isomorphic to $M_2(\mathbb C)$. Since
any matrix $B\in M_2(\mathbb C)$ satisfies the identity $B^2 = tr(B) B - \det(B) I_2$, we conclude that
$$
((b_0 - n^{-1}\mathbf{B}^+)x)^2 = \chi(x) (b_0 - n^{-1}\mathbf{B}^+)x + \frac{1}{2}((\chi(x)^2 - \chi(x^2)) (b_0 - n^{-1}\mathbf{B}^+).
$$
Since $\mathbf{B}^+z = \delta(z)\mathbf{B}^+$ for $z \in A$, after opening the brackets and collecting coefficients we obtain the result. (Here we used the identity $\det(B) = \frac{1}{2}(tr(B)^2 - tr(B^2))$.)
\end{proof}

As a corollary we obtain that for any $x \in A$ such that $b_0$, $x$, and $\mathbf{B}^+$ are linearly independent, there exist uniquely determined numbers $\kappa(x),\lambda(x),\mu(x)$ such that 
\begin{equation}\label{040315e}
x^2 = \kappa(x) b_0 +\lambda(x) x + \mu(x) (\mathbf{B}^+ - b_0 - x)
\end{equation}
Let us take $x \in A$ with $\tau(x)=0$ (that is $b_0$ does not appear in $x$).
Then comparing the coefficient of $b_0$ (=applying $n^{-1}\tau$) in both sides gives us
$ \kappa(x) =n^{-1}\tau(x^2) = \langle x,x^*\rangle$.
Applying the degree homomorphism we get 
$\delta(x)^2 =\kappa(x) + \lambda(x)\delta(x) + \mu(x)(n-1-\delta(x))$.

It follows from \eqref{040315e} that $B(x)^2=(\lambda(x)-\mu(x))B(x)+(\kappa(x)-\mu(x))I_2$. Hence
\begin{equation}\label{040315b}
\begin{array}{c}
r(x) + s(x) = \lambda(x) -\mu(x);\\
r(x) s(x)  = \mu(x) - \kappa(x);
\end{array}
\end{equation}
Also $0=\tau(x) = \delta(x) + \frac{n-1}{2}(r(x)+s(x))$ and 
$\kappa(x) n = \tau(x^2) = \delta(x)^2 + \frac{n-1}{2}(r(x)^2+s(x)^2).$
This implies 
\begin{equation}\label{040315f}
\begin{array}{rcl}
r(x)+s(x)& = & -\frac{2\delta(x)}{n-1}\\
r(x)^2+s(x)^2 & = & 2\frac{\kappa(x)n-\delta(x)^2}{n-1}
\end{array}.
\end{equation}
From here we conclude that
$$
r(x)s(x) = \frac{(n+1)\delta(x)^2 - \kappa(x)n(n-1)}{(n-1)^2} \implies 
\mu(x) = \frac{(n+1)\delta(x)^2 - \kappa(x)(n-1)}{(n-1)^2}.
$$
Finally we obtain
\begin{equation}\label{040315c} 
\begin{array}{c}
\lambda(x) = \displaystyle{\frac{(n+1)\delta(x)^2 - 2(n-1)\delta(x) -\kappa(x)(n-1)}{(n-1)^2}}, \mbox{ and } \\
\{r(x),s(x)\} = -\displaystyle{\frac{\delta(x)}{n-1}\pm\frac{\sqrt{\kappa(x) n (n-1)-\delta(x)^2 n}}{n-1}}.
\end{array}
\end{equation}
If $x=\sum\limits_{i=1}^4k_ib_i$, then $\kappa(x)=\langle x,x^* \rangle$ implies that 
$$ \kappa(x)=k_1^2\delta_1 + k_2^2\delta_2 + 2k_3k_4\delta_3. $$
If $x=b_i,i=1,2$, then  $\kappa(b_i)=\delta_i$, 
\begin{equation}\label{120315a}
\lambda(b_i)=\frac{(n+1)\delta_i^2 - 3(n-1)\delta_i}{(n-1)^2}, \mbox{ and }  
\mu(b_i) = \frac{(n+1)\delta_i^2 - \delta_i(n-1)}{(n-1)^2}. 
\end{equation}
If $x=b_i$, $i=3,4$, then $\kappa(b_i)=0$, and hence
\begin{equation}\label{120315b}
\lambda(b_i)= \frac{(n+1)\delta_i^2 - 2(n-1)\delta_i}{(n-1)^2}, \mbox{ and } 
\mu(b_i) =\frac{(n+1)\delta_i^2 }{(n-1)^2}. 
\end{equation}

Now it follows from the above that if $x_0 = 0$ then
$$
x^2 = (r(x)+s(x)) x  - r(x)s(x)b_0 + \mu(x)\mathbf{B}^+ \implies 
$$
\begin{equation}\label{060315a}
B(x)^2 = -\frac{2\delta(x)}{n-1} B(x) - \frac{(n+1)\delta(x)^2 - \kappa(x)n(n-1)}{(n-1)^2} I_2.
\end{equation}
Taking into account that $\kappa(x) = \langle x, x^*\rangle$ we conclude that 
\begin{equation}\label{060315b}
\begin{array}{l}
B(x_1)B(x_2)+B(x_2)B(x_1) = B(x_1+x_2)^2 - B(x_1)^2 - B(x_2)^2 \\
\qquad = -\displaystyle{\frac{2\delta(x_1)}{n-1} B(x_2) - \frac{2\delta(x_2)}{n-1} B(x_1) - 
\frac{2(n+1)\delta(x_1)\delta(x_2) -  2n(n-1)\langle x_1,x_2^*\rangle }{(n-1)^2} I_2}. 
\end{array}
\end{equation}

\medskip
\noindent {\bf Proof of Theorem \ref{nc5d}:}  First, we substitute our RBA-basis elements into the above to establish our identities for the matrix entries. 

\smallskip
{\bf Step 1.} Substituting $x=b_1$ into \eqref{040315c} (notice that $\kappa(b_1)=\delta_1$) we obtain that 
$$
\{r(b_1),s(b_1)\} = -\frac{\delta_1}{n-1} \pm \frac{\sqrt{\delta_1n(n-1)-\delta_1^2n}}{n-1}  \implies 
\{a,d\} = -\frac{\delta_1}{n-1} \pm \frac{\sqrt{\delta_1n(n-1)-\delta_1^2n}}{n-1}\implies
$$
\begin{equation}\label{020515a}
a = -\frac{\delta_1}{n-1} + \varepsilon_1\frac{\sqrt{\Delta_1}}{n-1} \quad \mbox{ and } \quad  
d = -\frac{\delta_1}{n-1} - \varepsilon_1\frac{\sqrt{\Delta_1}}{n-1}, 
\end{equation}
where 
$\Delta_1:=\delta_1n(n-1)-\delta_1^2n = n(n-1-\delta_1)\delta_1$ and $\varepsilon_1=\pm 1$.

\smallskip
{\bf Step 2.} Substituting $x_1 = b_1, x_2 = b_2$ into \eqref{060315b} we obtain
$$
\begin{bmatrix} 2av & (a+d)w\\ (a+d)w & 2dx\end{bmatrix}
=
-\frac{2\delta_1}{n-1} 
\begin{bmatrix} v & w\\ w & x\end{bmatrix}
 - \frac{2\delta_2}{n-1} 
\begin{bmatrix} a & 0\\ 0 & d\end{bmatrix} - 
\frac{2(n+1)\delta_1\delta_2}{(n-1)^2} I_2.
$$
From the above equation we can derive equations for $v$ and $x$: 
$$
\left\{
\begin{array}{lcr}
av & = & -\frac{\delta_1}{n-1} v - \frac{\delta_2}{n-1}a - \frac{(n+1)\delta_1\delta_2}{(n-1)^2}\\
dx & = & -\frac{\delta_1}{n-1} x - \frac{\delta_2}{n-1}d - \frac{(n+1)\delta_1\delta_2}{(n-1)^2}
\end{array}
\right.
$$
Thus, substituting the values of $a$ and $d$ given in \eqref{020515a} into the above equations it is straightforward to check that 
$$
\left\{
\begin{array}{lcr}
 v & = &  -\frac{\delta_2}{n-1} - \varepsilon_1\frac{n\delta_2\delta_1}{ (n-1)\sqrt{\Delta_1}}  \\
x & = &   -\frac{\delta_2}{n-1} + \varepsilon_1\frac{n\delta_2\delta_1}{ (n-1)\sqrt{\Delta_1}}
\end{array}
\right.
$$

\smallskip
{\bf Step 3.} Substituting $b_2$ for $x$ into \eqref{060315a} we obtain 
$$ w^2 =-v^2 -\frac{2 \delta_2 v}{n-1} -\frac{(n+1)\delta_2^2-n(n-1)\delta_2}{(n-1)^2}. $$ 
Substiting the value of $v$ obtained in Step 2 yields 
$$ w^2 = \frac{2n\delta_2\delta_3}{(n-1)(n-1-\delta_1)}. $$
So 
\begin{equation}\label{110315c}
w = \varepsilon_2 \sqrt{\frac{2n\delta_2\delta_3}{(n-1)(n-1-\delta_1)}}
\end{equation}
where $\varepsilon_2 =\pm 1$.

\smallskip
{\bf Step 4.}  Substituting the values for $a$, $v$, $d$, and $x$ obtained above into the equations $1+a+v+2r=0$ and $1+d+x+2u=0$ gives the indicated values of $r$ and $u$. 

\smallskip
{\bf Step 5.} Substituting $x=b_3-b_4$ into \eqref{060315a} 
and taking into account that $\delta(b_3-b_4)=0, \kappa(b_3-b_4)=\langle b_3 - b_4, b_4 - b_3\rangle =-2\delta_3$, 
we obtain
$$
\begin{bmatrix} 0 & s-t \\ t-s & 0 \end{bmatrix}^2 = -\frac{2\delta_3 n}{n-1} I_2\implies s-t =  \varepsilon_3\sqrt{\frac{2\delta_3 n}{n-1}},
$$
where $\varepsilon_3=\pm 1$.

\smallskip
{\bf Step 6.} Using $w+s+t =0$ we find that 
$$
s = -\frac{w}{2} + \varepsilon_3 \sqrt{\frac{\delta_3 n}{2(n-1)}},
t = -\frac{w}{2} - \varepsilon_3 \sqrt{\frac{\delta_3 n}{2(n-1)}}.
$$

This completes the proof in one direction.  The other direction can be proved in a straightforward (although tedious) manner by simply calculating structure constants for the basis.  Our formulas for these structure constants are (with $\varepsilon:=\varepsilon_1\varepsilon_2\varepsilon_3$, and not including those involving $b_0$): 

$$\begin{array}{ll} 
\lambda_{111} = \frac{(n+1)\delta_1^2-3(n-1)\delta_1}{(n-1)^2}, \quad & \lambda_{112}=\lambda_{113}=\lambda_{114}= \frac{(n+1)\delta_1^2-(n-1)\delta_1}{(n-1)^2}, \\
\lambda_{121}=\lambda_{211}=\frac{(n+1)\delta_1\delta_2-(n-1)\delta_2}{(n-1)^2}, \quad & \lambda_{122}=\lambda_{212}=\frac{(n+1)\delta_1\delta_2-(n-1)\delta_1}{(n-1)^2}, \\
\lambda_{123}=\lambda_{214}=\frac{(n+1)\delta_1\delta_2+\varepsilon(n-1)\sqrt{n\delta_1\delta_2}}{(n-1)^2}, \quad & \lambda_{124}=\lambda_{213}=\frac{(n+1)\delta_1\delta_2-\varepsilon(n-1)\sqrt{n\delta_1\delta_2}}{(n-1)^2}, \\

\lambda_{131}=\lambda_{141}=\lambda_{311}=\lambda_{411}=\frac{(n+1)\delta_1\delta_3-(n-1)\delta_3}{(n-1)^2}, \quad & \lambda_{132}=\lambda_{412}=\frac{(n+1)\delta_1\delta_2\delta_3+\varepsilon(n-1)\delta_3\sqrt{n\delta_1\delta_2}}{\delta_2(n-1)^2}, \\
\lambda_{133}=\lambda_{414}=\frac{(n+1)\delta_1\delta_3-(n-1)\delta_1-\varepsilon(n-1)\sqrt{n\delta_1\delta_2}}{(n-1)^2}, \quad & \lambda_{134}=\lambda_{143}=\lambda_{314}=\lambda_{413}=\frac{(n+1)\delta_1\delta_3}{(n-1)^2} \\
\lambda_{142}=\lambda_{312}=\frac{(n+1)\delta_1\delta_2\delta_3-\varepsilon(n-1)\delta_3\sqrt{n\delta_1\delta_2}}{\delta_2(n-1)^2}, \quad & \lambda_{144}=\lambda_{313}=\frac{(n+1)\delta_1\delta_3-(n-1)\delta_1+\varepsilon(n-1)\sqrt{n\delta_1\delta_2}}{(n-1)^2}, \\

\lambda_{221}=\lambda_{223}=\lambda_{224}=\frac{(n+1)\delta_2^2-(n-1)\delta_2}{(n-1)^2}, \quad & \lambda_{222}=\frac{(n+1)\delta_2^2-3(n-1)\delta_2}{(n-1)^2}, \\

\lambda_{231}=\lambda_{421}=\frac{(n+1)\delta_1\delta_2\delta_3-\varepsilon(n-1)\delta_3\sqrt{n\delta_1\delta_2}}{\delta_1(n-1)^2}, \quad & \lambda_{232}=\lambda_{422}=\lambda_{242}=\lambda_{322}=\frac{(n+1)\delta_2\delta_3-(n-1)\delta_3}{(n-1)^2}, \\
\lambda_{233}=\lambda_{424}=\frac{(n+1)\delta_2\delta_3-(n-1)\delta_2+\varepsilon(n-1)\sqrt{n\delta_1\delta_2}}{(n-1)^2}, \quad & \lambda_{234}=\lambda_{423}=\lambda_{243}=\lambda_{324}=\frac{(n+1)\delta_2\delta_3}{(n-1)^2} \\

\lambda_{241}=\lambda_{321}=\frac{(n+1)\delta_1\delta_2\delta_3+\varepsilon(n-1)\delta_3\sqrt{n\delta_1\delta_2}}{\delta_1(n-1)^2}, \quad & \lambda_{244}=\lambda_{323}=\frac{(n+1)\delta_2\delta_3-(n-1)\delta_2-\varepsilon(n-1)\sqrt{n\delta_1\delta_2}}{(n-1)^2}, \\
\lambda_{331}=\lambda_{332}=\lambda_{334}= \frac{(n+1)\delta_3^2}{(n-1)^2}, \quad & \lambda_{441}=\lambda_{442}=\lambda_{443}=\frac{(n+1)\delta_3^2}{(n-1)^2}, \\
\lambda_{343}=\lambda_{344}=\lambda_{433}=\lambda_{434}=\frac{(n+1)\delta_3^2-2(n-1)\delta_3}{(n-1)^2} \quad & 
\lambda_{333}=\lambda_{444}=\frac{(n+1)\delta_3^2-2(n-1)\delta_3}{(n-1)^2}, \\
\lambda_{341}=\lambda_{432}=\frac{(n+1)\delta_1\delta_3^2-(n-1)\delta_1\delta_3-\varepsilon(n-1)\delta_3\sqrt{n\delta_1\delta_2}}{\delta_1(n-1)^2}, \quad & \lambda_{342}=\lambda_{431}=\frac{(n+1)\delta_1\delta_3^2-(n-1)\delta_1\delta_3+\varepsilon(n-1)\delta_3\sqrt{n\delta_1\delta_2}}{\delta_1(n-1)^2}.
\end{array}$$

\hfil \hfil \hfil \hfil \hfil \hfil \hfil $\square$

\medskip
Any combination of the three sign choices in Theorem \ref{nc5d} is interchangeable by a change of basis.  This is because conjugation by $\begin{bmatrix} 0 & 1 \\ 1 & 0 \end{bmatrix}$ interchanges the sign choices for $\varepsilon_1$ and $\varepsilon_3$ and fixes the one for $\varepsilon_2$, and conjugating by $\begin{bmatrix} 1 & 0 \\ 0 & -1 \end{bmatrix}$ switches the choices for $\varepsilon_2$ and $\varepsilon_3$ and fixes that of $\varepsilon_1$.  So any combination of the three sign choices can be achieved through a sequence of these operations.  

\medskip
We have used Theorem \ref{nc5d} to look for RBA$^{\delta}$-bases of $\mathbb{C} \oplus M_2(\mathbb{C})$ that have rational and/or nonnegative structure constants.  Both situations occur, and can occur simultaneously.  

\begin{example} \label{e1} {\rm Choosing $n=25$, $\delta_1=\delta_2=\delta_3=6$ with any choice of the three signs produces a {\it rational} TA-basis for $\mathbb{C} \oplus M_2(\mathbb{C})$.  The basis elements (with positive sign choices) are: $b_0 =(1,I)$,
$$ b_1 = (6, \begin{bmatrix} \frac{-1+5\sqrt{3}}{4} & 0 \\ 0 & \frac{-1-5\sqrt{3}}{4} \end{bmatrix}), \quad b_2 = (6, \begin{bmatrix} \frac{-3-5\sqrt{3}}{12} & \frac{5}{\sqrt{6}} \\ \frac{5}{\sqrt{6}} & \frac{-3+5\sqrt{3}}{12} \end{bmatrix}), \quad b_3 = (6, \begin{bmatrix} \frac{-3-5\sqrt{3}}{12} & \frac{-5\sqrt{6}+3\sqrt{2}}{12} \\ \frac{-5\sqrt{6}-3\sqrt{3}}{12} & \frac{-3+5\sqrt{3}}{12} \end{bmatrix}), $$
and $b_3^*$.  All structure constants for this basis lie in $\mathbb{Z}[\frac12]$, and the largest denominator that occurs among them is an $8$.
}\end{example} 

\begin{example} {\rm Here are the elements of another RBA$^{\delta}$-basis of $\mathbb{C} \oplus M_2(\mathbb{C})$ whose entries and structure constants are all rational:  $b_0 =(1,I)$,
$$ b_0 = (1,I_2), \quad b_1 = \big(\frac32, \begin{bmatrix} -\frac{3}{2} & 0 \\ 0 & \frac12 \end{bmatrix}\big), \quad b_2 = \big(\frac16, \begin{bmatrix} \frac29 & \frac49 \\ \frac49 & -\frac{1}{6} \end{bmatrix}\big), \quad b_3 = \big(\frac23, \begin{bmatrix} \frac29 & \frac49 \\ -\frac{8}{9} & -\frac{2}{3} \end{bmatrix}\big), \mbox{ and }b_3^*.$$  
}\end{example} 

\section{RBA$^{\delta}$ bases for $\mathbb{C} \oplus M_n(\mathbb{C})$, $n \ge 2$} 

\medskip
In light of Theorem \ref{nc5d} one is almost certain that  $\mathbb{C} \oplus M_n(\mathbb{C})$ will have an RBA$^{\delta}$-basis for $n > 2$.  In this section we give a general construction that applies for all $n \ge 2$. 

We begin with a general restriction on involutions admitting positive degree maps that applies to any semisimple algebra.  

\begin{thm}\label{conjtransthm}
Let $A$ be a finite-dimensional semisimple algebra whose involution $*$ extends complex conjugation on scalars.  Suppose $\chi \in Irr(A)$, and let $Ae_{\chi}$ be the simple component of $A$ corresponding to $\chi$.  Identify $Ae_{\chi}$ with a full matrix algebra $M_m(\mathbb{C})$ where $m = \chi(1)$.  If $A$ has an RBA-basis that admits a positive degree map, then up to a change of basis, the restriction of $*$ to $Ae_{\chi}$ will be equal to the conjugate transpose map on $M_m(\mathbb{C})$.   
\end{thm}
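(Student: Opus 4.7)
The plan is to analyze the restriction of $*$ to a single matrix component $Ae_\chi \cong M_m(\mathbb{C})$, where $m = \chi(1)$. Since $e_\chi^* = e_\chi$ (noted after Proposition \ref{ctorba}), $*$ restricts to a conjugate-linear algebra anti-involution $\sigma$ on $Ae_\chi$. Fixing an isomorphism $\rho: Ae_\chi \to M_m(\mathbb{C})$, the goal is to exhibit an inner automorphism of $M_m(\mathbb{C})$ that transports $\sigma$ to the conjugate transpose $X \mapsto \bar{X}^\top$.

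The first step is to put $\sigma$ in a normal form. The map $\psi(X) := \sigma(\bar{X})$ is a $\mathbb{C}$-linear anti-automorphism of $M_m(\mathbb{C})$; post-composing with the transpose anti-automorphism yields a $\mathbb{C}$-linear automorphism, which by Skolem-Noether is inner. This produces $\sigma(X) = S\,\bar{X}^\top S^{-1}$ for some invertible $S \in M_m(\mathbb{C})$. Imposing $\sigma^2 = \mathrm{id}$ gives $S = \alpha\,\bar{S}^\top$ for some $\alpha \in \mathbb{C}^\times$; iterating forces $|\alpha|=1$, so rescaling $S$ by a suitable unit complex scalar (which leaves $\sigma$ unchanged) allows us to assume $\bar{S}^\top = S$, i.e., $S$ is Hermitian.

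The crucial step is to force $S$ to be \emph{definite}. Because $A$ has a positive degree map, the remark following Proposition \ref{ctorba} gives $\chi(x^*x) \geq 0$ for every $x \in A$, so $\mathrm{tr}(\sigma(X)X) \geq 0$ for every $X \in M_m(\mathbb{C})$. Diagonalize $S = U D \bar{U}^\top$ with $U$ unitary and $D = \mathrm{diag}(d_1,\ldots,d_m)$ real and nonzero, and substitute $Y = \bar{U}^\top X U$; a cyclic-trace computation rewrites the inequality as
\[
\mathrm{tr}(D\,\bar{Y}^\top D^{-1} Y) \;=\; \sum_{i,j} \frac{d_i}{d_j}\,|Y_{ji}|^2 \;\geq\; 0 \qquad \text{for all } Y \in M_m(\mathbb{C}).
\]
Testing on matrix units forces $d_i/d_j > 0$ for every pair $i,j$, so all $d_i$ share a common sign. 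Replacing $S$ by $-S$ if necessary (still leaving $\sigma$ unchanged), we take $S$ positive definite. Then, setting $T = S^{1/2}$ (the positive Hermitian square root) and replacing $\rho$ by $\rho'(y) := T^{-1} \rho(y)\, T$, a direct calculation (using $\bar{T}^\top = T$) transforms $S$ into $T^{-1} S\, T^{-1} = I$, so the induced involution $\sigma'$ on $M_m(\mathbb{C})$ is simply $Y \mapsto \bar{Y}^\top$, as required.

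The main obstacle is the definiteness step. Without the positivity supplied by the positive degree map, any indefinite Hermitian $S$ (for instance $S = \mathrm{diag}(1,-1)$) would give a perfectly valid anti-involution on $M_m(\mathbb{C})$ extending complex conjugation, and the theorem would fail. The essential input is therefore $\chi(x^*x) \geq 0$, which the paragraph after Proposition \ref{ctorba} derives from the positivity $\tau(x^*x) > 0$ of the standard feasible trace induced by $\delta$. This mirrors the role played by the analogous positivity argument in the $\mathbb{R}\oplus\mathbb{H}$ versus $\mathbb{R} \oplus M_2(\mathbb{R})$ dichotomy of Lemma \ref{involu}.
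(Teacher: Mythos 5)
Your proposal is correct and follows essentially the same route as the paper's proof: Skolem--Noether to write the involution as $X \mapsto S\,\overline{X}^{\top}S^{-1}$, normalization of $S$ to a Hermitian matrix, the positivity $\chi(x^*x)\ge 0$ tested on (conjugated) matrix units to force definiteness of $S$, and a square root of $S$ to reduce to the conjugate transpose. The only differences are cosmetic (the paper factors $H=\overline{P}^{\top}P$ with $P=\sqrt{D}\,U$ rather than using the positive Hermitian square root, and places $S$ on the other side), so no further comment is needed.
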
 

\begin{proof}
Suppose $\mathbf{B} = \{1=b_0,b_1,\dots,b_d\}$ is an RBA-basis of $A$ that admits a positive degree map $\delta$.  Let $\tau$ be the standard feasible trace of $A$. 
By Proposition \ref{ctorba} and the subsequent remarks, $\tau = \sum_{\psi \in Irr(A)} m_{\psi} \psi$ with $m_{\delta}=1$ all $m_{\psi}>0$, and $\psi(xx^*)\ge 0$, for all $\psi \in Irr(A)$.  Consider the restriction of the involution $*$ to $Ae_{\chi}$, which we identify with $M_m(\mathbb{C})$.  Since the projection of $A$ into this simple component is surjective, the above implies that $tr(XX^*) \ge 0$ for all $X \in M_m(\mathbb{C})$.   Since $*$ extends complex conjugation on scalars, the map $X \mapsto \overline{(X^*)}^\top$ is an algebra automorphism of $M_m(\mathbb{C})$.  Therefore, $X^* = S^{-1} \overline{X}^\top S$, for some $S \in GL_m(\mathbb{C})$.  Replacing $X$ by $X^*$ we obtain $X = (S^{-1}\overline{S}^\top) X (S^{-1} \overline{S}^\top)^{-1}$, for all $X \in M_m(\mathbb{C})$, and so it follows that $S^{-1}\overline{S}^\top = \alpha I_n$, for some nonzero $\alpha \in \mathbb{C}$.  

From the equation $\overline{S}^{\top} = \alpha S$ we have that $S$ commutes with its conjugate transpose, and so $S^* = S^{-1} \overline{S}^{\top} S = \overline{S}^{\top}$.   Furthermore, $S = \overline{(\overline{S}^{\top})}^{\top} = \bar{\alpha} \alpha S$, so $\alpha = e^{i\theta}$ for some $\theta \in \mathbb{R}$.   If we set $H = e^{\frac{i\theta}{2}}S$, then $\overline{H}^{\top} = H = H^*$, and $X^* = H^{-1} \overline{X}^{\top} H$ for all $X \in M_m(\mathbb{C})$.  

We need to find an invertible matrix $P\in GL_m(\mathbb{C})$ such that $P X^* P^{-1} = \overline{(P X P^{-1})}^{\top}$ for all $X \in M_n(\mathbb{C})$.  It is enough to find an invertible matrix $P$ for which $\overline{P}^{\top} P = H$. 

Since $H$ is a non-degenerate Hermitian matrix, $H = U^{-1}DU$ for some unitary matrix $U$ ($U^{-1}=\overline{U}^{\top}$) and diagonal matrix $D$ with nonzero real diagonal entries $\lambda_1,...,\lambda_n$.  For each $1 \le s,t \le n$, let $Y = U^{-1}E_{st}U$, where $E_{st}$ is the matrix unit with $(s,t)$-entry $1$ and $0$ entries elsewhere.  Then 
$$ Y^* Y = H^{-1} \overline{Y}^{\top} H Y = U^{-1} D^{-1} U \overline{Y}^{\top} U^{-1} D U Y = U^{-1}D^{-1}E_{ts}DE_{st}U. $$ \
Hence, 
$$ 0 \le \chi(Y^* Y) = \mathsf{tr}(D^{-1}E_{ts}DE_{st}) = \lambda_t^{-1} \lambda_s. $$
Therefore, all of the eigenvalues of $H$ have the same sign.  Replacing $H$, if necessary,  by $-H$ we may assume that
$\lambda_i > 0$ for all $i=1,...,n$.  Let $\sqrt{D}$ be the diagonal matrix with diagonal entries $\sqrt{\lambda_i}$, $i = 1,\dots,n$.  Let $P = \sqrt{D} U$.  Then $H = \overline{P}^{\top} P$, as required.
\end{proof} 

\medskip
We will finish with a construction of an RBA$^{\delta}$-basis of $\mathbb{C} \oplus M_m(\mathbb{C})$ with respect to the involution that restricts to the conjugate transpose in the second component.  As in the last section, it suffices to find an RBA$^{\delta}$-basis of $A:=\mathbb{R} \oplus M_m(\mathbb{R})$, which is what we will do.   In what follows we write $(X,Y)$ for $tr(XY^\top)$. Notice that $(\star,\star)$ is a standard Euclidean form on the vector space $M_m(\mathbb{R})$. Notice that $(XY,Z)=(Y,X^\top Z)$ for all $X,Y,Z\in M_m(\mathbb{R})$.

\begin{proposition}\label{p1} Let $\delta_1,...,\delta_{m^2}$ be positive real numbers. Set $n:=1+\sum_{i=1}^{m^2} \delta_i$. Assume that there exist $m^2$ matrices $B_1,...,B_{m^2} \in M_m(\mathbb{R})$ such that 
\begin{enumerate}
\item[(a)] 
$$
(B_i,B_j)= \left\{\begin{array}{rl}
\frac{\delta_i(n-\delta_i) m}{n-1}, & i=j \\
-\frac{\delta_i\delta_j m}{n-1}, & i \ne j
\end{array}\right.
$$
\item[(b)]there exists an involutive permutation $i\mapsto i',i = 1,..., m^2$ with exactly $m$ fixed points such that $B_i^\top = B_{i'}$ and $\delta_i = \delta_{i'}$ hold for all $i$;
\item[(c)] $\sum_{i=1}^{m^2} B_i = -I$. 
\end{enumerate}
Then the vectors $b_0 := (1,I), b_i:=(\delta_i, B_i)$ form an $RBA^\delta$-basis of $\tilde{A}=\mathbb{R} \oplus M_m(\mathbb{R})$. 
\end{proposition}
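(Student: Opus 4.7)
The plan is to verify the defining axioms of an RBA$^\delta$-basis for $\mathbf{B}=\{b_0,b_1,\dots,b_{m^2}\}$ directly from the hypotheses (a)--(c). Because $\dim\tilde{A}=1+m^2=|\mathbf{B}|$, it suffices to prove linear independence, closure under ${}^*$, reality of the structure constants with the correct vanishing/positivity pattern for $\lambda_{ij0}$, and the existence of a positive degree map. The involution on $\tilde{A}$ sends $(x,X)$ to $(x,X^\top)$, so by (b) we have $b_i^*=(\delta_{i'},B_{i'})=b_{i'}$, identifying the $*$-permutation of indices with $i\mapsto i'$. The coordinate projection $\delta:\tilde{A}\to\mathbb{R}$ is an algebra homomorphism with $\delta(b_i)=\delta_i>0$ and $\delta(b_i^*)=\delta_i$, our candidate positive degree map.

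For linear independence I would first show $B_1,\dots,B_{m^2}$ form a basis of $M_m(\mathbb{R})$. Condition (a) realizes the Gram matrix $G=((B_i,B_j))$ as $G=\frac{m}{n-1}(nD-\delta\delta^\top)$, where $D=\mathrm{diag}(\delta_1,\dots,\delta_{m^2})$ and $\delta=(\delta_1,\dots,\delta_{m^2})^\top$. A short eigenvalue computation with $u=D^{-1/2}\delta$ (for which $u^\top u=n-1$) shows $nI-uu^\top$ has eigenvalue $n$ of multiplicity $m^2-1$ and eigenvalue $1$ in the direction $u$, so $\det G\neq 0$. Hypothesis (c) then places $I$ in the span of the $B_i$'s, and the observation that any relation $\alpha_0 b_0+\sum_i\alpha_ib_i=0$ forces $\sum_i(\alpha_i-\alpha_0)B_i=0$, hence $\alpha_i=\alpha_0$ for all $i$ and then $\alpha_0 n=0$, completes the proof that $\mathbf{B}$ is a basis of $\tilde{A}$.

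The central step is the computation of $\lambda_{ij0}$. Expanding $B_iB_j=\sum_k\mu_k B_k$ uniquely and substituting $I=-\sum_k B_k$ from (c), the desired decomposition $b_ib_j=\lambda_{ij0}b_0+\sum_k\lambda_{ijk}b_k$ forces $\lambda_{ijk}=\mu_k+\lambda_{ij0}$ for $k\geq 1$, and matching first coordinates yields
\[
\lambda_{ij0}=\frac{1}{n}\Bigl(\delta_i\delta_j-\sum_k\mu_k\delta_k\Bigr).
\]
To evaluate this sum, I would first extract $\mathrm{tr}(B_k)=-m\delta_k/(n-1)$ from (a) by expanding $(B_k,\sum_\ell B_\ell)=(B_k,-I)=-\mathrm{tr}(B_k)$, obtaining $\mathrm{tr}(B_iB_j)=-\frac{m}{n-1}\sum_k\mu_k\delta_k$. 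On the other hand, $\mathrm{tr}(B_iB_j)=(B_i,B_j^\top)=(B_i,B_{j'})$, and (a) evaluates this to $\frac{\delta_i(n-\delta_i)m}{n-1}$ when $j=i'$ and to $-\frac{m\delta_i\delta_j}{n-1}$ otherwise. Plugging back produces $\lambda_{ii'0}=\delta_i>0$ and $\lambda_{ij0}=0$ when $j\neq i'$; the symmetry $\lambda_{ii'0}=\lambda_{i'i0}$ is immediate from $\delta_i=\delta_{i'}$.

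The main (mild) obstacle is recognizing the trace identity $\mathrm{tr}(B_iB_j)=(B_i,B_{j'})$, which is what converts hypothesis (a) into the precise vanishing/positivity pattern required of $\lambda_{ij0}$. Once this is in hand, reality of the remaining structure constants $\lambda_{ijk}=\mu_k+\lambda_{ij0}$ is automatic, and the RBA$^\delta$-basis verification is complete.
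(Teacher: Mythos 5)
Your proof is correct, and it reaches the conclusion by a more hands-on route than the paper. The paper's proof introduces the bilinear form $\langle (x,X),(y,Y)\rangle = n^{-1}\bigl(xy+\tfrac{n-1}{m}(X,Y)\bigr)$ on $\tilde{A}$ (essentially $n^{-1}\tau(uv^*)$ for the standard feasible trace), checks that it is associative in the sense $\langle b_ib_j,b_k\rangle=\langle b_j,b_{i'}b_k\rangle$ and that condition (a) makes $\mathbf{B}$ an orthogonal basis with $\langle b_i,b_j\rangle=\delta_{ij}\delta_i$; linear independence, the vanishing pattern of $\lambda_{ij0}$, and the positivity $\lambda_{ii'0}=\delta_i$ then all drop out at once from $\lambda_{i'j0}=\langle b_i,b_j\rangle$. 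You instead prove linear independence directly from the Gram matrix $G=\tfrac{m}{n-1}(nD-\delta\delta^\top)$ (a clean eigenvalue argument the paper does not spell out), and you compute $\lambda_{ij0}$ by expanding $B_iB_j=\sum_k\mu_kB_k$, eliminating $I$ via (c), and converting $\sum_k\mu_k\delta_k$ into $\operatorname{tr}(B_iB_j)=(B_i,B_{j'})$ using $\operatorname{tr}(B_k)=-m\delta_k/(n-1)$. Your trace identity is exactly the unpacked form of the paper's associativity relation at $k=0$, so the two arguments rest on the same underlying facts; the paper's packaging is slicker and yields all structure constants $\lambda_{ijk}\delta_k=\langle b_ib_j,b_k\rangle$ uniformly, while yours is more elementary, makes the nondegeneracy of the configuration in (a) explicit, and only develops what is needed for the $\lambda_{ij0}$ axioms. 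Both are complete proofs.
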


\begin{proof}
Define a bilinear form $\sg{\star,\star}$ on $\tilde{A}$ as follows
$$
\sg{(x,X),(y,Y)}:= n^{-1}(xy +\frac{n-1}{m} (X,Y)).
$$
Also define an anti-automorphism ${}^*$ of $\tilde{A}$ by $(x,X)^* = (x,X^\top)$.
A direct check shows that 
\begin{enumerate}
\item $b_i^* = b_{i'}$;
\item $\sg{b_i,b_j} = \delta_{ij} \delta_i$ (here $\delta_{ij}$ is the Kronecker's delta);
\item $\sg{b_i b_j,b_k} = \sg{b_j,b_{i'} b_k}$
\end{enumerate}
Since $b_0,b_1,...,b_{m^2}$ is a basis of the real algebra $\tilde{A}$, we obtain that
$b_i b_j=\sum_{k}\lambda_{ijk} b_k$. where $\lambda_{ijk}$ are real numbers.
Since $b_i$'s form an orthogonal basis of $\tilde{A}$, we get $\lambda_{ijk} \delta_k = \sg{b_i b_j,b_k}$. 

Since $b_0$ is the identity of $A$ and $\delta_0=1$, we can write
$\delta_{ij} \delta_i = \sg{b_i,b_j} = \sg{b_0, b_{i'} b_j} = \lambda_{i'j0}$. Thus 
$\lambda_{ab0}\neq 0 \iff a' = b$ and in the latter case $\lambda_{aa'0} = \delta_a > 0$.

Thus the basis $b_0,...,b_{m^2}$ satisfies all the axioms of RBA$^\delta$ basis.
\end{proof}

\begin{proposition}\label{p2} Assume that there exist matrices $B_i$, $i=1,...,m^2$ which satisfy conditions (a)-(b) of Prop~\ref{p1}. Denote $B:=\sum_{i=1}^{m^2} B_i$. 
Then the matrices 
$$
\tilde{B}_i:=B_i -2\frac{(B+I, B_i)}{(B+I,B+I)} (B+I)
$$
satisfy the conditions (a)-(c) of Prop~\ref{p1}.
\end{proposition}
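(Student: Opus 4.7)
The plan is to recognize that the map $R:X \mapsto X - 2\frac{(B+I,X)}{(B+I,B+I)}(B+I)$ defining $\tilde{B}_i = R(B_i)$ is the Householder reflection in the hyperplane of $M_m(\mathbb{R})$ orthogonal to $B+I$ (with respect to $(\cdot, \cdot)$). The degenerate case $B+I = 0$ may be set aside at the outset, since then $\sum_i B_i = -I$ and the $B_i$ already satisfy (a)--(c); otherwise $R$ is well-defined.

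Condition (a) is immediate: a standard one-line expansion shows that any Householder reflection is an isometry of $(\cdot,\cdot)$, so $(\tilde B_i,\tilde B_j) = (B_i,B_j)$ and the prescribed inner product values transfer unchanged. For condition (b) I would reuse the involution $i\mapsto i'$ from the hypotheses. Since this involution permutes the $B_i$, we have $B^\top=\sum_i B_i^\top=\sum_i B_{i'}=B$, so $B+I$ is symmetric. The general identity $(X^\top,Y^\top)=(X,Y)$ together with $B_i^\top = B_{i'}$ then yields $(B+I,B_i)=(B+I,B_{i'})$, and transposing the defining formula of $\tilde B_i$ gives $\tilde B_i^\top=\tilde B_{i'}$; the requirement $\delta_i=\delta_{i'}$ is carried over untouched.

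The substantive step is condition (c). Summing $\tilde B_i$ over $i$ gives
$$ \sum_i \tilde B_i \;=\; B - 2\frac{(B+I,B)}{(B+I,B+I)}(B+I), $$
and using $(B+I,B)=(B+I,B+I)-(B+I,I)$, the statement $\sum_i \tilde B_i = -I$ rearranges to the single scalar identity $(B,B)=m=(I,I)$. The plan for this identity is a direct expansion from condition (a): writing $(B,B)=\sum_{i,j}(B_i,B_j)$ and using $\sum_i\delta_i=n-1$, the diagonal and off-diagonal contributions combine to
$$ \tfrac{m}{n-1}\bigl[n(n-1)-(n-1)^2\bigr] \;=\; m. $$
This last scalar computation is essentially the only non-trivial content of the argument; once it is in hand, conditions (a), (b), and (c) all fall out of the reflection interpretation of $R$. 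I do not anticipate a genuine obstacle, only the bookkeeping required to verify $(B,B)=m$.
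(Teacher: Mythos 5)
Your proposal is correct and follows essentially the same route as the paper: condition (a) from the fact that the Householder reflection is an isometry of $(\cdot,\cdot)$, condition (b) from the symmetry of $B+I$ and compatibility of the reflection with transposition, and condition (c) reduced to the scalar identity $(B,B)=m=(I,I)$, which you verify by the same expansion of $\sum_{i,j}(B_i,B_j)$ that the paper leaves implicit. Your explicit treatment of the degenerate case $B+I=0$ is a small additional care the paper omits.
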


\begin{proof} Notice that the linear map $L: X\mapsto \tilde{X}:=X - 2\frac{(B+I, X)}{(B+I,B+I)} (B+I)$ is a reflection with respect to the form $(\star,\star)$.   Therefore $(L(X),L(Y))=(X,Y)$,  implying $(\tilde{B}_i,\tilde{B}_j)=(B_i,B_j)$ hereby proving (a).

It follows from part (a) of Prop~\ref{p1} that $(B,B)=m$. Together with $(I,I)=m$ this implies that $\tilde{B} = - I$. Thus
$\sum_{i}\tilde{B}_i =\tilde{B}=-I$. Thus $\tilde{B}_i$ satisfy the properties (a) and (c). The property (b) follows from $L(X^\top)=L(X)^\top$ (notice that $B+I$ is a symmetric matrix).
\end{proof}

Now we'd like to build a basis $B_{ij}$ of $M_m(\mathbb{R})$ which satisfies conditions (a)-(b) of Prop~\ref{p2}. 
To simplify calculations we take all $\delta_i$ to be the same, i.e. $\delta_k= \delta > 0$. In this case $n = 1 + m^2\delta$ and the conditions in part (a) of Proposition \ref{p2} read as follows:

\begin{equation}\label{eq1}
(B_{ij},B_{k\ell})= \left\{\begin{array}{rl}
\frac{n-\delta}{m}, & (i,j)=(k,\ell) \\
-\frac{\delta}{m}, & (i,j)\neq (k,\ell)
\end{array}\right.
\end{equation}

To build a basis which satisfies (a)-(b) we  look for the matrices of the form $B_{ij} = x E_{ij} + y J$ where $E_{ij}$ are elementary matrices, $J$ the $m \times m$ matrix of all $1$'s, and $x,y$ are real parameters which will be found later. Clearly the matrices $B_{ij}$ satisfy part (b) of Proposition \ref{p2}. To satisfy (a) we first compute the inner products $(B_{ij},B_{k\ell})$. We obtain that
$(B_{ij},B_{ij}) = x^2 + 2xy + y^2 m^2$ and  
$(B_{ij},B_{k\ell}) = 2xy + y^2 m^2$ for $(i,j)\neq (k,\ell)$. Now \eqref{eq1} yields the following equations for $x$ and $y$
$$
x^2 + 2xy + y^2 m^2 = \frac{n-\delta}{m}, \qquad 2xy + y^2 m^2 = -\frac{\delta}{m}
$$
Substracting the second equation from the first we obtain $x= \pm\sqrt{\frac{n}{m}}$.  Then from the second one we get $y =\frac{1}{m^2}\left(-x \pm \frac{1}{\sqrt{m}}\right)$.  Thus there exist matrices which satisfy conditions (a)-(b).  Applying Prop~\ref{p2} we conclude that $A$ has an $RBA^\delta$-basis.  This proves the main result of this section. 

\begin{thm} \label{CpMmC}
The noncommutative algebra $\mathbb{C} \oplus M_m(\mathbb{C})$ with the conjugate transpose involution has an RBA$^{\delta}$-basis for all $m \ge 2$. 
\end{thm}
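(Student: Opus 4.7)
The plan is to assemble Theorem \ref{CpMmC} as a corollary of Propositions \ref{p1} and \ref{p2} together with an explicit construction of matrices satisfying the simpler conditions (a)--(b). The overall strategy is to reduce the problem stepwise: first from the complex algebra with conjugate-transpose involution to the real subalgebra $\tilde{A} = \mathbb{R} \oplus M_m(\mathbb{R})$ with transpose involution, then from conditions (a)--(c) to conditions (a)--(b), and finally to explicit matrices in a convenient two-parameter family.

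First, I would note that an RBA$^\delta$-basis of $\tilde{A}$ automatically yields one of $\mathbb{C} \oplus M_m(\mathbb{C})$, since scalar extension preserves structure constants and the transpose on $M_m(\mathbb{R})$ extends to the conjugate transpose on $M_m(\mathbb{C})$ (consistent with the axiom that $*$ restricts to complex conjugation on scalars). Thus it suffices to exhibit matrices $B_1, \ldots, B_{m^2} \in M_m(\mathbb{R})$ together with positive reals $\delta_1, \ldots, \delta_{m^2}$ satisfying the hypotheses of Proposition \ref{p1}; the proposition then assembles $b_0 = (1,I)$, $b_i = (\delta_i, B_i)$ into the desired RBA$^\delta$-basis.

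Next, Proposition \ref{p2} tells us we do not need to arrange the trace condition (c) $\sum_i B_i = -I$ by hand: any matrices satisfying (a) and (b) can be reflected through $B+I$ to produce new matrices satisfying all three conditions. So the entire construction reduces to producing $m^2$ real matrices satisfying just the inner-product constraint (a) and the transposition symmetry (b). To achieve this I would take all degrees equal to some common positive real $\delta$, set $n = 1 + m^2\delta$, and use the ansatz $B_{ij} = x E_{ij} + y J$ indexed by pairs $(i,j) \in \{1,\ldots,m\}^2$, where $J$ is the all-ones matrix. Symmetry (b) is built in via $(i,j) \mapsto (j,i)$. Condition (a) with common degree $\delta$ reduces, after computing $(B_{ij}, B_{k\ell})$, to the two scalar equations $x^2 + 2xy + y^2 m^2 = (n-\delta)/m$ and $2xy + y^2 m^2 = -\delta/m$, which solve to $x = \pm\sqrt{n/m}$ and $y = \frac{1}{m^2}\bigl(-x \pm 1/\sqrt{m}\bigr)$; these are manifestly real.

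There is really no hard step in this argument once Propositions \ref{p1} and \ref{p2} are in hand, since both the verification of (a)--(b) for the ansatz and the solvability of the resulting $2 \times 2$ system are elementary. The only point that requires a bit of care is the final assembly: after invoking Proposition \ref{p2} to pass from the matrices $B_{ij}$ to the reflected matrices $\tilde{B}_{ij}$, one must check that the reflection does not collapse the basis, i.e.~that $B + I$ is nonzero so the reflection is well-defined. This follows because $(B,B) = m = (I,I)$ (from (a) summed appropriately), so $B+I$ is symmetric with $(B+I,B+I) \ne 0$ in the generic case; the explicit formulas exhibit the degeneration to be avoided, and the ansatz with $\delta > 0$ safely avoids it. With this checked, Proposition \ref{p1} applied to $\{\tilde{B}_{ij}\}$ yields the required RBA$^\delta$-basis of $\tilde{A}$, and hence of $\mathbb{C} \oplus M_m(\mathbb{C})$.
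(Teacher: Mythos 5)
Your proposal is correct and follows essentially the same route as the paper: reduce to $\mathbb{R}\oplus M_m(\mathbb{R})$, invoke Propositions \ref{p1} and \ref{p2}, and realize conditions (a)--(b) via the ansatz $B_{ij}=xE_{ij}+yJ$ with equal degrees, solving the same two scalar equations. Your extra remark about the reflection being well-defined (i.e.\ $(B+I,B+I)\neq 0$, which by positive definiteness of $(\star,\star)$ can only fail when $B=-I$, in which case no reflection is needed) is a small point the paper leaves implicit.
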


\begin{corollary} 
Every finite-dimensional semisimple algebra that has a one-dimensional simple component can be equipped with an RBA$^{\delta}$-basis. 
\end{corollary}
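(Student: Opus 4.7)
The plan is to induct on the number $r$ of noncommutative simple components of $A$. Using the hypothesis that $A$ has a one-dimensional simple component, I would write $A \cong \mathbb{C}^k \oplus M_{n_1}(\mathbb{C}) \oplus \cdots \oplus M_{n_r}(\mathbb{C})$ with $k \ge 1$ and $n_i \ge 2$ for each $i$. For the base case $r=0$, $A \cong \mathbb{C}^k$ is the complex group algebra of any abelian group $G$ of order $k$, so $G$ itself is a TA-basis of $A$ with the trivial character of $G$ as a positive degree map.

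For the inductive step ($r \ge 1$), I would set $A' := \mathbb{C}^k \oplus M_{n_1}(\mathbb{C}) \oplus \cdots \oplus M_{n_{r-1}}(\mathbb{C})$, which still has a one-dimensional simple component since $k \ge 1$; the inductive hypothesis then supplies an RBA$^{\delta}$-basis $\mathbf{B}'$ of $A'$ with positive degree map $\delta'$. Because any linear character of a semisimple algebra factors through one of its one-dimensional simple components, $e_{\delta'}$ must be the identity of some $\mathbb{C}$-summand of $A'$, so that
\[
A'(1 - e_{\delta'}) \cong \mathbb{C}^{k-1} \oplus M_{n_1}(\mathbb{C}) \oplus \cdots \oplus M_{n_{r-1}}(\mathbb{C}).
\]
I would then fix an RBA$^{\delta}$-basis $\mathbf{B}_r$ of $\mathbb{C} \oplus M_{n_r}(\mathbb{C})$ supplied by Theorem~\ref{CpMmC}, and form the circle product $\mathbf{B}' \circ_{\delta'} \mathbf{B}_r$ using the positive degree map $\delta'$ of $\mathbf{B}'$ as the driving character.

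By the structure of the circle product recorded in Section~3 (and used with a noncommutative first operand in the proof of Corollary~\ref{allrba}), the resulting algebra is $A'(1 - e_{\delta'}) \oplus (\mathbb{C} \oplus M_{n_r}(\mathbb{C})) \cong A$; and since $\delta'$ together with the positive degree map $\delta_r$ of $\mathbf{B}_r$ are both positive on their respective bases, the combined character $\tilde{\delta}$ described in Section~3 is a positive degree map on the circle product. This exhibits an RBA$^{\delta}$-basis of $A$ and closes the induction. There is no serious obstacle: the heavy lifting has already been done, by Theorem~\ref{CpMmC} for the existence of RBA$^{\delta}$-bases of $\mathbb{C} \oplus M_m(\mathbb{C})$ and by the discussion in Section~3 guaranteeing that the circle product preserves the existence of a positive degree map when the driving character is itself positive. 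The only mild point that requires attention is the identification of $e_{\delta'}$ as the central idempotent of a $\mathbb{C}$-summand of $A'$, which is immediate from semisimplicity.
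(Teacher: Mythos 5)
Your argument is correct and is essentially the paper's own proof: both induct so as to peel off one matrix component at a time and glue an RBA$^{\delta}$-basis of $\mathbb{C}\oplus M_{n_r}(\mathbb{C})$ from Theorem~\ref{CpMmC} onto the inductively obtained basis via the circle product driven by the positive degree map, exactly as in Corollary~\ref{allrba}. The only differences are cosmetic (you induct on the number of noncommutative components rather than on dimension, and you spell out the commutative base case and the identification of $e_{\delta'}$ with a $\mathbb{C}$-summand).
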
 

\begin{proof} 
Suppose $A \simeq \mathbb{C} \oplus A_0$.  If $A_0$ is simple then it follows from Theorem \ref{CpMmC} that $A$ has an RBA$^{\delta}$-basis.  Otherwise we can write $A \simeq A_1 \oplus M_m(\mathbb{C})$ where $A_1$ has a one-dimensional component.  By induction on the dimension we can assume $A_1$ has an RBA$^{\delta}$-basis $\mathbf{B}_1$.  Let $\mathbf{B}'$ be an RBA$^{\delta}$-basis of $\mathbb{C} \oplus M_m(\mathbb{C})$.  By applying properties we have for the circle product and arguing as in the proof of Corollary \ref{allrba}, we find that $\mathbf{B}_1 \circ \mathbf{B}'$ is an RBA$^{\delta}$-basis of 
$$A_1 \circ (\mathbb{C} \oplus M_m(\mathbb{C})) \simeq A_1 \oplus M_m(\mathbb{C}) \simeq A.$$  
\end{proof}

\begin{example} {\rm We will illustrate the construction for Theorem \ref{CpMmC} in the case where $m=3$ and $\delta=7$.  In this case the construction says to take $x=-\frac{8}{\sqrt{3}}$ and $y = \frac{1}{\sqrt{3}}$, and set $B_{ij}$ to be the $3 \times 3$ matrix $\frac{1}{\sqrt{3}} (-8E_{ij}+J)$ for $1 \le i,j \le 3$.  Applying the reflection mapping of Proposition \ref{p2} to each $B_{ij}$ produces the following list of matrices $\tilde{B}_{ij}$, for $1 \le i,j \le 3$: 
$$
\tilde{B}_{11} = \frac19 \begin{bmatrix} -1-16\sqrt{3} & 8 & 8 \\ 8 & -1+8\sqrt{3} & 8 \\ 8 & 8 & -1+8\sqrt{3} \end{bmatrix}, 
\tilde{B}_{12} = \frac19 \begin{bmatrix} -1 & -4-20\sqrt{3} & -4+4\sqrt{3} \\ -4+4\sqrt{3} & -1 & -4+4\sqrt{3} \\ -4+4\sqrt{3} & -4+4\sqrt{3} & -1 \end{bmatrix}, $$
$$\begin{array}{l}
\tilde{B}_{13} = \displaystyle{\frac19} \begin{bmatrix} -1 & -4+4\sqrt{3} & -4-20\sqrt{3} \\ -4+4\sqrt{3} & -1 & -4+4\sqrt{3} \\ -4+4\sqrt{3} & -4+4\sqrt{3} & -1 \end{bmatrix}, \tilde{B}_{21} = \tilde{B}_{12}^{\top} \qquad \qquad \qquad \qquad \qquad \qquad \qquad \qquad \qquad \\
\end{array}$$
$$ \tilde{B}_{22} = \frac19 \begin{bmatrix} -1+8\sqrt{3} & 8 & 8 \\ 8 & -1-16\sqrt{3} & 8 \\ 8 & 8 & -1+8\sqrt{3} \end{bmatrix}, 
 \tilde{B}_{23} = \frac19 \begin{bmatrix} -1 & -4+4\sqrt{3} & -4+4\sqrt{3} \\ -4+4\sqrt{3} & -1 & -4-20\sqrt{3} \\ -4+4\sqrt{3} & -4+4\sqrt{3} & -1 \end{bmatrix},  $$
 $$
 \tilde{B}_{31} = \tilde{B}_{13}^{\top}, \quad \tilde{B}_{32} =  \tilde{B}_{23}^{\top}, \mbox{ and } 
\tilde{B}_{33} = \frac19 \begin{bmatrix} -1+8\sqrt{3} & 8 & 8 \\ 8 & -1+8\sqrt{3} & 8 \\ 8 & 8 & -1-16\sqrt{3} \end{bmatrix}. $$  

The set consisting of $(1,I_3)$ and the nine $(7,\tilde{B}_{ij})$'s for $1 \le i,j \le 3$ is (indeed!) an RBA$^{\delta}$-basis of $\mathbb{C} \oplus M_3(\mathbb{C})$ whose structure constants lie in the ring $\mathbb{Z}[\frac{1}{3}, \sqrt{3}]$.  The largest denominator that occurs among its structure constants is a $27$. 
}\end{example} 

\bigskip
The authors would like to express their gratitude to the anonymous referee that gave several insightful comments that significantly improved the final presentation of this article.

{\footnotesize 

\medskip
{\sc A.~Herman, Department of Mathematics and Statistics, University of Regina, Regina, SK S4A 0A2, CANADA. Email:} {\tt Allen.Herman@uregina.ca}

\medskip
{\sc M.~Muzychuk, Department of Computer Science and Mathematics, Netanya Academic College, University St. 1, 42365, Netanya, ISRAEL.  Email:} {\tt muzy@netanya.ac.il}

\medskip
{\sc B.~Xu, Department of Mathematics and Statistics, Eastern Kentucky University, 521 Lancaster Ave., Richmond, KY 40475-3133, USA. Email:} {\tt bangteng.xu@eku.edu} 

\end{document}